\documentclass[12pt,twoside,a4paper]{amsart}

\usepackage[textwidth=15cm,textheight=22cm,centering]{geometry}
\usepackage[T1]{fontenc}
\usepackage{lmodern}
\usepackage{microtype}
\usepackage{amsmath,amssymb,amsthm,mathtools}
\usepackage{xurl}
\usepackage[hidelinks]{hyperref}
\newtheorem{theorem}{Theorem}[section]
\newtheorem{maintheorem}{Theorem}

\newtheorem{proposition}[theorem]{Proposition}
\newtheorem{lemma}[theorem]{Lemma}
\newtheorem{Question}[theorem]{Question}
\newtheorem{corollary}[theorem]{Corollary}
\newtheorem{problem}[theorem]{Problem}
\theoremstyle{definition}
\newtheorem{definition}[theorem]{Definition}
\theoremstyle{remark}

\newcommand{\C}{\mathbb C}
\newcommand{\R}{\mathbb R}
\newcommand{\ii}{\sqrt{-1}}
\newcommand{\HH}{\mathsf H}
\DeclareMathOperator{\tr}{tr}
\DeclareMathOperator{\diag}{diag}

\title{Nakano-positive determinants outside the Hodge--Riemann cone}
\author{Zhangchi Chen}
\date{}

\hypersetup{
  pdftitle={Nakano-positive determinants outside the Hodge-Riemann cone},
  pdfauthor={Zhangchi Chen},
  pdfsubject={Counterexamples to Dinh-Nguyen Problem 2.3},
  pdfkeywords={Hodge-Riemann cone, Griffiths cone, Nakano positivity,
  hard Lefschetz theorem, dually Lorentzian polynomials,
  generalized Alexandrov-Fenchel inequality}
}

\subjclass[2020]{Primary 32Q15; Secondary 15A15, 15A75, 14C30, 52A40, 58A14.}
\keywords{Hodge--Riemann bilinear relations, Griffiths positivity, Nakano positivity, dually Lorentzian polynomials, generalized Alexandrov--Fenchel inequality.}
\address{Zhangchi Chen, School of Mathematical Sciences, Key Laboratory of MEA (Ministry of Education), and Shanghai Key Laboratory of PMMP, East China Normal University, Shanghai 200241, China.}
\email{zcchen@math.ecnu.edu.cn}

\begin{document}
\begin{abstract}
Dinh and Nguy\^en asked whether the determinant of a Griffiths positive matrix of $(1,1)$-forms belongs to the Hodge--Riemann cone.
For every $n\geqslant4$ and $2\leqslant k\leqslant n-2$, we present counterexamples constructed by AI: Nakano positive $k\times k$ matrices of constant $(1,1)$-forms on $\C^n$ whose determinants have singular Lefschetz maps in bidegree $(1,n-k-1)$.
Under the simultaneous diagonalizability (SD) condition, we prove the Hodge--Riemann property in every bidegree $(p,q)$ with $p+q=n-k$ and $\min(p,q)\leqslant1$.
The proof uses the generalized Alexandrov--Fenchel inequality of Ross, S\"u\ss, and Wannerer.
For every $n\geqslant6$ and $2\leqslant k\leqslant n-4$, we also present SD examples whose determinants have singular Lefschetz maps in bidegree $(2,n-k-2)$.
Their underlying idea comes from Ross--Toma's construction.
The remaining cases $n=k\geqslant4$ and $n=k+1\geqslant5$ are open for general Griffiths positive matrices.
They reduce to the top-Chern case of Griffiths' positivity question, equivalently to Finski's double mixed discriminant problem.
We also ask whether a $2\times2$ counterexample on $\C^4$ can be both Nakano positive and dual Nakano positive.
\end{abstract}
\maketitle

\section{Introduction}

Let $X$ be a compact complex manifold of dimension $n$.
A \emph{K\"ahler form} on $X$ is a smooth real closed $(1,1)$-form whose associated Hermitian form is positive definite.
Fix such a form $\omega$.
For $p+q\leqslant n$, put $k=n-p-q$ and $\Omega=\omega^k$, and define the Lefschetz map
\[
 L_{\Omega}^{p,q}\colon
 H^{p,q}(X,\C)\longrightarrow H^{n-q,n-p}(X,\C),
 \qquad [\alpha]\longmapsto[\alpha\wedge\Omega].
\]
The classical hard Lefschetz theorem says that $L_{\Omega}^{p,q}$ is an isomorphism.
To state the two companion theorems, following Dinh and Nguy\^en \cite[p.~121]{DN},
define the \emph{primitive subspace associated with $\Omega$}, that is, the space of $\Omega$-primitive $(p,q)$-classes, by
\[
 P^{p,q}_{\Omega}
 =\bigl\{[\alpha]\in H^{p,q}(X,\C):
 [\alpha\wedge\Omega\wedge\omega]=0\bigr\},
\]
and define the Hermitian form
\[
 Q_{\Omega}([\alpha],[\beta])
 =\ii^{p-q}(-1)^{(p+q)(p+q-1)/2}
 \int_X\alpha\wedge\overline\beta\wedge\Omega.
\]
The Hodge--Riemann bilinear relations assert that $Q_{\Omega}$ is positive definite on $P^{p,q}_{\Omega}$, while the Lefschetz decomposition theorem gives the $Q_{\Omega}$-orthogonal direct sum
\[
 H^{p,q}(X,\C)
 =[\omega]\wedge H^{p-1,q-1}(X,\C)\oplus P^{p,q}_{\Omega}.
\]
We refer to \cite[Sections~6.2--6.3]{Voisin} for these classical statements.

Mixed versions of the three classical theorems remain valid for products of K\"ahler forms:
Timorin proved the linear case \cite[Proposition~1, the Main Theorem, and Corollary~2]{Timorin},
and Dinh and Nguy\^en proved the corresponding results on compact K\"ahler manifolds \cite[Theorems~A--C]{DNmixed}.
Dinh and Nguy\^en then introduced the pointwise \emph{Hodge--Riemann cone}: roughly, it is the set of real $(k,k)$-forms that can be deformed to $\omega^k$ while the necessary Lefschetz maps remain invertible.
The precise linear definition is recalled in Section~\ref{sec:HR}.
Their local-to-global theorem states that a smooth closed $(k,k)$-form whose values lie pointwise in the Hodge--Riemann cone for $(p,q)$ gives a cohomology class satisfying all three theorems in that bidegree \cite[Theorem~1.1]{DN}.

This reduces the search for new cohomology classes with the three properties to a problem in exterior algebra.
Let $V$ be a complex vector space of dimension $n$, and put
\[
 V^{p,q}=\Lambda^pV^*\otimes\Lambda^q\overline{V^*}.
\]
For $\Omega\in V^{k,k}$ and $p+q=n-k$, the pointwise Lefschetz map is
\[
 L_{\Omega}^{p,q}\colon V^{p,q}\longrightarrow V^{n-q,n-p},
 \qquad \alpha\longmapsto\alpha\wedge\Omega.
\]
Now let $M=(\alpha_{ab})_{a,b=1}^k$ be a Hermitian matrix of constant $(1,1)$-forms on $V$.
It is \emph{Griffiths positive} when
\[
 \sum_{a,b=1}^k\theta_a\overline{\theta_b}\,\alpha_{ab}
\]
is a K\"ahler form for every $0\ne\theta\in\C^k$.
The \emph{Griffiths cone} consists of the determinant forms $\det M$ obtained in this way.
Dinh and Nguy\^en asked \cite[Problem~2.3]{DN}:

\begin{problem}[Dinh--Nguy\^en]\label{prob:DN}
Is the Griffiths cone contained in the Hodge--Riemann cone?
\end{problem}

The answer is affirmative when the matrix itself is diagonal:
\[
 M=\diag(\omega_1,\ldots,\omega_k).
\]
Griffiths positivity then says precisely that every $\omega_j$ is K\"ahler, and
\[
 \det M=\omega_1\wedge\cdots\wedge\omega_k.
\]
Thus Timorin's mixed Hodge--Riemann theorem, recorded by Dinh and Nguy\^en as \cite[Proposition~2.2]{DN}, gives an affirmative answer in this case.
The problem is whether the same conclusion survives the off-diagonal entries of a general Griffiths positive matrix.
Using Griffiths's positivity result for the second Chern form \cite{Griffiths}, one can prove the affirmative cases $(n,k)=(2,2),(3,2)$ \cite[Corollary~1.8]{Chen}.
More recently, Wan's rank-three positivity theorem gives the affirmative case $(n,k)=(3,3)$ \cite[Theorem~0.3]{Wan}, and restriction to three-dimensional subspaces gives $(n,k)=(4,3)$.

We first present counterexamples constructed by AI, giving a negative answer throughout the range $n\geqslant4$ and $2\leqslant k\leqslant n-2$, even under the stronger assumption of Nakano positivity.
For the Hermitian coefficient tensor of $M$, Nakano positivity means positivity on the full tensor product $\C^k\otimes\C^n$, whereas Griffiths positivity tests only decomposable tensors.
The precise definitions are recalled in Section~\ref{sec:positivity}.

\begin{maintheorem}\label{thm:main}
For every $n\geqslant4$ and $2\leqslant k\leqslant n-2$, there is a Nakano positive Hermitian $k\times k$ matrix $M$ of constant $(1,1)$-forms on $\C^n$ such that
\[
 L_{\det M}^{1,n-k-1}\text{ is not an isomorphism}.
\]
In particular, $\det M$ belongs to the Griffiths cone but not to the Hodge--Riemann cone.
\end{maintheorem}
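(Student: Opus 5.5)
The plan is to build one basic counterexample and then propagate it to all admissible $(n,k)$ by a padding argument.

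\textbf{Base case.} The base case is $(n,k)=(4,2)$. There the map to break is
\[
 L^{1,1}_{\det M}\colon V^{1,1}\to V^{3,3}.
\]
Since $\det M\in V^{2,2}$, this is a $16\times16$ linear map, and we want to show it is singular.

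I would take
\[
 M=\begin{pmatrix}\omega_1&\beta\\ \overline\beta&\omega_2\end{pmatrix},
\]
with $\omega_1,\omega_2$ diagonal Kähler forms and $\beta$ a constant $(1,1)$-form. Then
\[
 \det M=\omega_1\wedge\omega_2-\beta\wedge\overline\beta .
\]
Nakano positivity means that the $8\times8$ Hermitian coefficient matrix, whose blocks are the coefficient matrices of the entries, is positive definite. This holds as soon as $\beta$ is small relative to $\omega_1,\omega_2$, or more generally when the Schur complement is positive.

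The idea is to let $\beta$ be a rank-one-type form such as $\lambda\,\ii\,dz_1\wedge d\bar z_2$, or a sum of a few such terms, with a free parameter $\lambda$. The parameter is pushed as far as Nakano positivity permits. One then computes $\det L^{1,1}_{\det M}$ as a polynomial in $\lambda$ and looks for a root inside the Nakano-admissible range.

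To keep the computation small, I would choose $\omega_1,\omega_2,\beta$ invariant under the torus action $z_j\mapsto e^{i t_j}z_j$, twisted appropriately. Then $L^{1,1}$ becomes block diagonal according to weights, so one only needs the determinant of a small block, say $4\times4$, in which $\lambda$ appears. A sign change of that block determinant, together with the fact that the identity-like form $\omega^2$ gives a nonsingular block, then produces a zero by continuity. Concretely, the block with weight $dz_1\wedge d\bar z_2$ is where $\beta\wedge\overline\beta$ couples nontrivially.

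\textbf{Main obstacle.} I expect the hardest part to be finding explicit data for which the singular parameter value lies strictly inside the Nakano cone and not merely inside the Griffiths cone. This is where the search (done ``by AI'') happens. My first attempt would be a one-parameter family with $\omega_1\ne\omega_2$ strongly anisotropic, for instance
\[
 \omega_1=\ii\,\diag(1,1,\varepsilon,\varepsilon),\qquad \omega_2=\ii\,\diag(\varepsilon,\varepsilon,1,1),
\]
so that $\omega_1\wedge\omega_2$ is nearly degenerate in some directions. In that regime a moderate $\beta$ can cancel it in the Lefschetz map while Nakano positivity still holds. Certifying that the final matrix is positive definite reduces to checking exact rational principal minors.

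\textbf{Extension to all $n\geqslant4$, $2\leqslant k\leqslant n-2$.} Given a counterexample $M_0$ on $\C^{4}$ with $k=2$, I would set
\[
 M=\diag\bigl(M_0\oplus 0,\ \omega',\ldots,\omega'\bigr)\quad\text{with }k-2\text{ copies of }\omega',
\]
where $M_0\oplus 0$ is extended to $\C^n$ by adding a small multiple of the Kähler form in the new variables to keep it positive. Here $\omega'$ is Kähler on $\C^n$. The block-diagonal structure preserves Nakano positivity, and $\det M=\det M_0\wedge\omega'^{k-2}$.

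To transport the kernel, suppose $\alpha\in V_4^{1,1}$ satisfies $\alpha\wedge\det M_0=0$ on $\C^4$. I would choose $\omega'$ to be a degenerate-limit-friendly form, namely one built largely on the complementary variables $z_5,\ldots,z_n$, and take
\[
 \tilde\alpha=\alpha\wedge(\text{product of }dz_j\wedge d\bar z_j\text{ terms in the extra variables}),
\]
adjusted to have bidegree $(1,n-k-1)$. This needs care: the extra $d\bar z$ directions raise $q$ only, so one must use the $(0,1)$ directions from the extra variables in $\tilde\alpha$, while $\omega'^{k-2}$ and the small perturbation fill in the rest.

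If the exact kernel is not preserved by this construction, the fallback is a continuity argument. Exact singularity is an algebraic condition, so I would instead keep a free parameter throughout and show that $\det L^{1,n-k-1}$ changes sign within the Nakano region, using the degenerate limit where it factors through the $\C^4$ case.

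Finally, the ``in particular'' clause follows because membership in the Hodge--Riemann cone requires $L^{p,q}$ to be invertible for the relevant bidegrees, including $(1,n-k-1)$.
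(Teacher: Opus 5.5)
Your proposal is a search plan rather than a proof. The existence of the example is the whole content of the theorem, and neither the base case nor the rank increase is established. In the base case, the one concrete configuration you propose cannot work. Write $\beta=cE_{12}$, $\omega_1=\sum a_iV_i$, $\omega_2=\sum b_iV_i$. Since $E_{12}\wedge E_{21}=-V_1V_2$, one gets $\det M=\omega_1\wedge\omega_2+|c|^2V_1V_2=\sum_{i<j}d_{ij}V_iV_j$ with all $d_{ij}>0$. The Nakano form couples only $u_{11}$ with $u_{22}$, so Nakano positivity is exactly $|c|^2<a_1b_2$.

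The block decomposition of Proposition~\ref{prop:sd-blocks-new} applies, since its proof uses only $\Omega=\sum d_IV_I$. It splits $L^{1,1}_{\det M}$ into the positive scalars $d_{[4]\setminus\{i,j\}}$ on $dz_i\wedge d\bar z_j$, $i\neq j$, and one $4\times4$ block $\HH_1^{[4]}$ on the span of $V_1,\dots,V_4$. So the weight-$dz_1\wedge d\bar z_2$ block you single out is harmless. The determinant of $\HH_1^{[4]}$ is $-(\ell_1+\ell_2+\ell_3)(-\ell_1+\ell_2+\ell_3)(\ell_1-\ell_2+\ell_3)(\ell_1+\ell_2-\ell_3)$. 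For $a=2(1,1,\varepsilon,\varepsilon)$, $b=2(\varepsilon,\varepsilon,1,1)$ you get $|c|^2<4\varepsilon$, hence $\ell_1<\sqrt{96}\,\varepsilon$ and $\ell_2=\ell_3=4+4\varepsilon^2$. Thus no factor vanishes for any admissible $c$.

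Your idea can be rescued by coupling an $\omega_1$-heavy direction with an $\omega_2$-heavy one. For $\beta=cE_{13}$ the Nakano bound becomes $|c|^2<a_1b_3=4$. The relation $\ell_2=\ell_1+\ell_3$ holds at $|c|^2=16\varepsilon+16\varepsilon^2/(1+\varepsilon^2)$, which is admissible, e.g.\ for $\varepsilon=1/10$. But that computation is exactly what a proof has to contain.

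The rank increase fails as you describe it. For $k=2$ your padding is the paper's Lemma~\ref{lem:dimension}: add $I_2\sum_{j>4}V_j$ and wedge the kernel with \emph{all} new $d\bar z_j$. This kills every new determinant term exactly. For $k\geqslant3$, bidegree $(1,n-k-1)$ leaves room for only $n-k-2$ of the $n-4$ new $d\bar z_j$. The $k-2$ new directions carried by $\omega'^{k-2}$ are therefore not annihilated, and once everything is made positive the kernel is lost. Already for $n=k+2$, take $\omega'=\omega_{\mathrm{new}}+\epsilon\,\omega_{\mathrm{old}}$ and $M_0+\epsilon\,\omega_{\mathrm{new}}I_2$. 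One finds $\alpha\wedge\det M=(k-2)\epsilon^2\,\alpha\wedge\tr M_0\wedge\omega_{\mathrm{old}}\wedge\omega_{\mathrm{new}}^{k-2}+O(\epsilon^3)$, which does not vanish in general.

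Your continuity fallback has no foothold either. At $\epsilon=0$ the map kills every $dz_a\wedge d\bar z_B$ with $a$ and $B$ new. Its determinant is therefore identically zero and carries no sign information.

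The paper avoids both difficulties by working directly on $\C^{k+2}$ with $M_t=\diag(\omega_0I_2+t\mathcal N,\omega_0I_{k-2})$, where the full form $\omega_0$ fills the extra diagonal slots. The Nakano matrix of $\mathcal N$ is the positive semidefinite $w_1w_1^*+w_2w_2^*$, so every $t\geqslant0$ is admissible and your ``main obstacle'' disappears. The kernel vector $\gamma=E_{13}-E_{31}+E_{24}-E_{42}$ is rigid. The identities $\gamma\wedge\det\mathcal N=-\tfrac12\gamma\wedge\omega_U^2$, $\gamma\wedge\omega_U^3=0$ and $\tr\mathcal N=\omega_U$ reduce $\gamma\wedge\det M_t$ to $\bigl(\binom k2+(k-1)t-\tfrac{t^2}{2}\bigr)\gamma\wedge\omega_U^2\wedge\omega_W^{k-2}$. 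One then only re-tunes $t$ to $t_k$ in each rank and applies Lemma~\ref{lem:dimension}.
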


The proof gives an explicit nonzero kernel and an explicit parameter in every rank.
For $n=k+2$ the obstruction is already in bidegree $(1,1)$; an extension in the coordinate directions gives all larger dimensions.
Transposition preserves the determinant and exchanges Nakano positivity with dual Nakano positivity.
Thus Theorem~\ref{thm:main} also gives dual Nakano positive counterexamples.
The matrices presented in its proof do not satisfy both conditions, since their partially transposed coefficient tensors have a negative eigenvalue, as computed in Section~\ref{sec:unrestricted}.
For $n=4$ and $k=2$, it remains open whether $L_{\det M}^{1,1}$ can be singular when $M$ satisfies both positivity conditions.
This is Question~\ref{qu:simultaneous-nakano} below.

Theorem~\ref{thm:main} leads us to ask which additional conditions on $M$ imply the Lefschetz and Hodge--Riemann theorems.
In \cite[Section~4]{Chen}, the author introduced the following additional condition for investigating this question.
\begin{definition}[Simultaneously diagonalizable (SD) condition]
The matrix $M$ is \emph{simultaneously diagonalizable} if there are complex linear coordinates such that
\begin{equation}\label{eq:sd}
 M=\sum_{j=1}^n B_jV_j,\qquad
 V_j=\frac{\ii}{2}dz_j\wedge d\bar z_j,\qquad B_j=B_j^*.
\end{equation}
\end{definition}
Note that the Hermitian coefficient matrices $B_j$ need not commute.
Under the SD condition, Griffiths positivity is equivalent to positive definiteness of every $B_j$ \cite[Proposition~4.1]{Chen}.
Under this condition, Chen proved the affirmative cases $n=4,5$ with $k=2$, as well as the Lefschetz isomorphism $L_{\det M}^{1,n-3}$ for every $n\geqslant4$ \cite[Theorem~1.9 and Corollary~4.8]{Chen}.
The same work also gives an affirmative answer under SD in the boundary cases $k=n-1,n$ for every rank \cite[Corollary~4.5]{Chen}.

For the entire SD class, it is enough to establish the Lefschetz isomorphism in bidegree $(1,n-k-1)$:
complex conjugation gives the isomorphism in bidegree $(n-k-1,1)$,
while positivity gives the pure-bidegree isomorphisms for $\det M$ in bidegrees $(0,n-k)$ and $(n-k,0)$ and for $\det M\wedge\omega^2$ in bidegrees $(0,n-k-2)$ and $(n-k-2,0)$.
Lemma~\ref{lem:sd-reduction} derives the Hodge--Riemann property in all bidegrees with $\min(p,q)\leqslant1$ from these isomorphisms.

\begin{maintheorem}\label{thm:sd}
Let $n\geqslant4$ and $2\leqslant k\leqslant n-2$, and let $M$ be a Griffiths positive Hermitian $k\times k$ matrix of constant $(1,1)$-forms satisfying SD.
Then $L_{\det M}^{1,n-k-1}$ is an isomorphism.
Moreover, for every $p+q=n-k$ with $\min(p,q)\leqslant1$, the form $\det M$ is Hodge--Riemann for $(p,q)$ with respect to any positive reference $(1,1)$-form.
\end{maintheorem}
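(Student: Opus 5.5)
The plan is to reduce everything to the single Lefschetz isomorphism $L_{\det M}^{1,n-k-1}$, as the paragraph before the statement indicates. Lemma~\ref{lem:sd-reduction} then supplies the Hodge--Riemann property for all $(p,q)$ with $\min(p,q)\leqslant1$. The remaining inputs are the conjugate bidegree $(n-k-1,1)$ and the pure bidegrees for $\det M$ and $\det M\wedge\omega^2$, which positivity handles. So the substance is injectivity of $\alpha\mapsto\alpha\wedge\det M$ on $V^{1,n-k-1}$, since the two spaces have equal dimension.

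Under SD, expand
\[
 \det M=\sum_{|J|=k} D_J\, V_J,\qquad V_J=\bigwedge_{j\in J}V_j,
\]
where $D_J=\sum_{\sigma}$ of the mixed discriminants of the $B_j$, $j\in J$. Concretely, $D_J=k!\,\mathsf D(B_{j_1},\ldots,B_{j_k})$ is the mixed discriminant, which is strictly positive because every $B_j$ is positive definite (Griffiths positivity under SD, \cite[Proposition~4.1]{Chen}). Thus $\det M$ is a "diagonal" $(k,k)$-form whose coefficients form the multiaffine polynomial
\[
 f(x)=\sum_J D_J x^J=\det\Bigl(\sum_j x_jB_j\Bigr),
\]
a determinantal, hence stable and Lorentzian, polynomial.

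The key step is to decompose $V^{1,n-k-1}$ by the index pattern of the monomials $dz_i\wedge d\bar z_{I}$. Wedging with the diagonal form $\det M$ preserves the splitting into blocks labelled by the "off-diagonal" data. When $i\notin I$, the coefficient of $dz_i\wedge d\bar z_I$ maps to a single monomial with coefficient $D_{J}$, where $J$ is the complement of $I\cup\{i\}$; this coefficient is nonzero. When $i\in I$, the block is indexed by the set $S$ of size $n-k$ containing the diagonal pair, together with its unpaired part. The map on such a block is given by a matrix of the form $(D_{S^c\cup\{?\}})$, i.e.\ the Hessian-type matrix $\bigl(\partial_i\partial_j g\bigr)$ of a suitable derivative $g$ of $f$ (set $x_j=0$ or differentiate appropriately), restricted off the diagonal. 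Its invertibility is exactly a nondegeneracy statement for the quadratic form $\sum_{i\neq j}\partial_i\partial_j g\,y_iy_j$ of a Lorentzian quadratic polynomial with all positive coefficients.

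The main obstacle is this nondegeneracy. A Lorentzian quadratic form has at most one positive eigenvalue, but it can be degenerate. Strict positivity of all $D_J$ is not obviously enough, since equality cases of Alexandrov--Fenchel type could produce a kernel. Here I would invoke the generalized Alexandrov--Fenchel inequality of Ross--S\"u\ss--Wannerer for (dually) Lorentzian polynomials. The quadratic form in question is a mixed evaluation of the dually Lorentzian polynomial $f$, and their result gives the strict hyperbolic signature $(1,m-1)$ when all the relevant coefficients are positive. That signature means the form has no kernel, which yields injectivity on each block. I would first check the case $n=k+2$, bidegree $(1,1)$, where the block is literally $\bigl(D_{\{i,j\}^c}\bigr)_{i\neq j}$ with zero diagonal and one needs $\det\neq0$. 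I would then treat general $n$ by fixing the extra indices of $I$, which reduces to the same shape with $f$ replaced by a partial derivative, still of the same determinantal and positive type.
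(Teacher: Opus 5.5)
\textbf{There is a genuine gap, in the nondegeneracy step.}

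\textbf{What agrees with the paper.} Your reduction to $L_{\det M}^{1,n-k-1}$ via Lemma~\ref{lem:sd-reduction} is the paper's. Your block analysis also matches: scalar blocks $d_J$ when $i\notin I$, and for $i\in I$ the $(k+2)\times(k+2)$ matrix $\HH_1^J=(d_{J\setminus\{r,s\}})_{r\ne s}$ with zero diagonal, where $J=[n]\setminus(I\setminus\{i\})$. This matrix is the Hessian of $g^\vee$ for $g=\sum_{I'\subset J}d_{I'}x_{I'}$. Two small corrections: $g$ is obtained by setting $x_j=0$ for $j\notin J$, not by differentiating; and $\sum_Jd_Jx_J$ is only the squarefree part of $\det(\sum_jx_jB_j)$, not equal to it.

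\textbf{Where the argument fails.} The equality case of Ross--S\"u\ss--Wannerer (Proposition~\ref{prop:AF}) does not turn positive coefficients into strict signature. Its hypothesis is that the matrices $A_1,\dots,A_\ell,Q$ in $\mathcal B(P_1,P_2)=D_m(P_1,P_2,s(A_1,\dots,A_\ell))$ are positive definite. The principle you actually invoke is ``dually Lorentzian with all coefficients positive implies signature $(1,m-1)$''. That principle is false. Take $F=2x_1x_2+2x_3x_4+(x_1+x_2)(x_3+x_4)$. It is multiaffine with positive coefficients, satisfies $F^\vee=F$, and is Lorentzian. Yet its Hessian has eigenvalues $4,0,-2,-2$; this is the paper's Berndtsson--Sibony-type example.

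Feeding $g$ itself into Proposition~\ref{prop:AF} does not help. The matrices that reproduce $x^{\mathsf T}\HH_1^Jy$ are the rank-one diagonal matrices $e_je_j^{\mathsf T}$, which are not positive definite. Perturbing them to $e_je_j^{\mathsf T}+\varepsilon I_m$ gives strictness only for $\varepsilon>0$. The limit $\varepsilon\to0$ can degenerate, and that is exactly what has to be excluded.

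\textbf{The missing idea.} Swap roles so that positive definiteness becomes available.
\begin{enumerate}
\item Choose $u_1,\dots,u_\ell$ spanning $\C^k$ and reals $a_{\rho i}>0$ with $B_i=\sum_\rho a_{\rho i}u_\rho u_\rho^*$ for all $i$ simultaneously. This is possible because each $B_i$ lies in the interior of a finitely generated rank-one cone, and the generators can be merged.
\item Let $s(y)=\det(U\diag(y)U^*)=\sum_{|T|=k}|\det U_T|^2y_T$. It is multiaffine because each $y_\rho$ multiplies a rank-one matrix. Both $s$ and $s^\vee$ are stable with nonnegative coefficients, so $s$ is dually Lorentzian by Br\"and\'en--Huh.
\item Let $A_\rho=\diag(a_{\rho i})_{i\in J}$. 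These are positive definite precisely because every $a_{\rho i}>0$.
\item From $\det(\sum_ix_iB_i)=s(\ldots,\sum_ia_{\rho i}x_i,\ldots)$ one gets $(k+2)!\,\mathcal B(X(x),X(y))=x^{\mathsf T}\HH_1^Jy$ for diagonal $X$.
\item Take $Q=I_{k+2}$. Then $\mathcal B(Q,Q)>0$, and the equality characterization makes $\mathcal B$ negative definite on the diagonal hyperplane $\mathcal B(X,Q)=0$. Hence $\HH_1^J$ has signature $(1,k+1)$.
\end{enumerate}
The strictness comes from the determinantal structure carried by the positive definite $A_\rho$, not from positivity of the $d_I$.
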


\begin{corollary}\label{cor:sd-complete}
Under the hypotheses of Theorem~\ref{thm:sd}, if $n-k=2$ or $3$, then $\det M$ is Hodge--Riemann in every bidegree $p+q=n-k$.
\end{corollary}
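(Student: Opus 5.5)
The corollary follows from Theorem~\ref{thm:sd} by listing the bidegrees $(p,q)$ with $p+q=n-k\in\{2,3\}$ and checking that each has $\min(p,q)\leqslant1$. Throughout, the hypotheses $n\geqslant4$, $2\leqslant k\leqslant n-2$, Griffiths positivity and SD are exactly those of Theorem~\ref{thm:sd}, so its second assertion applies verbatim to every bidegree it covers.

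If $n-k=2$, the bidegrees are $(2,0),(1,1),(0,2)$. Their minima are $0,1,0$, all at most $1$. Hence Theorem~\ref{thm:sd} gives the Hodge--Riemann property for $\det M$ in each of them, with respect to any positive reference $(1,1)$-form.

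If $n-k=3$, the bidegrees are $(3,0),(2,1),(1,2),(0,3)$. Their minima are $0,1,1,0$. Again every case is covered by Theorem~\ref{thm:sd}.

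The only point needing care is the meaning of ``Hodge--Riemann for $(p,q)$'' in the precise definition of Section~\ref{sec:HR}. Following Dinh--Nguy\^en, that definition may involve a deformation to $\omega^k$ along which the Lefschetz maps in bidegree $(p,q)$ and in the lower bidegrees $(p-1,q-1)$, $(p-2,q-2)$, and so on, stay invertible. These lower bidegrees concern $\det M\wedge\omega^{2j}$. Theorem~\ref{thm:sd} is stated as giving the full Hodge--Riemann property in bidegree $(p,q)$, so this chain of conditions is already part of its conclusion and nothing extra is needed. Even if one wanted to verify them directly, for $p+q\leqslant3$ they all land in bidegrees $(p-j,q-j)$ with $\min\leqslant0$, i.e.\ pure bidegrees. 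For $p+q=3$ that means $(1,0)$ or $(0,1)$. These are handled by the positivity argument recorded before Theorem~\ref{thm:sd}, via Lemma~\ref{lem:sd-reduction}. So I expect no genuine obstacle: the proof is an enumeration of $(p,q)$ together with a citation of Theorem~\ref{thm:sd}.
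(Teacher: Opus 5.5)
Your proposal is correct and matches the paper's proof, which is the single observation that every bidegree with $p+q\in\{2,3\}$ has $\min(p,q)\leqslant1$, so Theorem~\ref{thm:sd} applies directly. Your extra remark that the lower bidegrees $(p-r,q-r)$ are pure is also accurate but not needed, since Theorem~\ref{thm:sd} already asserts the full Hodge--Riemann property in those bidegrees.
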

\begin{proof}
Every such bidegree has $\min(p,q)\leqslant1$.
\end{proof}

The key input in Theorem~\ref{thm:sd} is the theory of dually Lorentzian polynomials developed by Ross, S\"u\ss, and Wannerer, in particular their generalized Alexandrov--Fenchel inequality and its equality characterization \cite[Theorem~8.2]{RSW}.
We construct an auxiliary multiaffine determinantal polynomial whose stability, together with that of its dual, allows us to apply the Lorentzian criterion of Br\"and\'en and Huh \cite[Proposition~2.2]{BH}.
The generalized inequality then gives a real symmetric matrix of signature $(1,k+1)$, which is precisely a nontrivial block of $L_{\det M}^{1,n-k-1}$.
Its strict equality characterization is essential: the Lorentzian property alone does not exclude a singular Hessian.

In the next bidegree, an additional block can be singular.

\begin{maintheorem}\label{thm:sdnegative}
For every $n\geqslant6$ and $2\leqslant k\leqslant n-4$, there exists a Griffiths positive Hermitian $k\times k$ matrix $M$ of constant $(1,1)$-forms satisfying SD such that $L_{\det M}^{2,n-k-2}$ is singular.
In particular, $\det M$ does not belong to the Hodge--Riemann cone.
\end{maintheorem}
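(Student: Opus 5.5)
The plan is to reduce first to the base case $k=2$, $n=6$ with bidegree $(2,2)$, and then propagate to larger $n$ and $k$ by adding coordinate directions. Under SD, $\det M=\sum_{|J|=k}c_J V_J$, where $V_J=\bigwedge_{j\in J}V_j$ and $c_J$ is the mixed discriminant $D(B_{j_1},\ldots,B_{j_k})$ times $k!$. Up to the permutation normalization, this is the formula from \cite[Section~4]{Chen}. Since $\det M$ is a diagonal $(k,k)$-form, $L^{p,q}_{\det M}$ decomposes into blocks indexed by $I\cup \bar I'$ patterns. In bidegree $(2,n-k-2)$ one block is the map on the span of $dz_{A}\wedge d\bar z_{B}$ with $|A|=2$, $|B|=n-k-2$ and $|A\cap B|=0$. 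This is the analogue of the Ross--Toma block. Ross and Toma constructed a $(k,k)$-form $\sum c_J V_J$ with positive coefficients for which the corresponding block is singular. Their coefficient pattern is not realizable as a product of Kähler forms, but it is a genuine "Schur-type" or positive-coefficient class.

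Step one is to write down the block explicitly. For $n=6$, $k=2$, $p=q=2$, a basis vector $dz_A\wedge d\bar z_B$ with $A,B$ disjoint pairs is sent to $\pm c_{C}\, dz_{A\cup C}\wedge d\bar z_{B\cup C}$, where $C$ is the complementary pair. The block therefore becomes a matrix indexed by ordered pairs of disjoint $2$-subsets, and its entries are the coefficients $c_{C}$. Its determinant is an explicit polynomial in the $15$ numbers $c_{\{i,j\}}$, and singularity is one algebraic equation on them.

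Step two is to realize a zero of this equation by mixed discriminants of positive definite $2\times2$ matrices. Here $c_{\{i,j\}}$ is proportional to $\det(B_i+B_j)-\det B_i-\det B_j$. I would follow Ross--Toma: choose $B_j$ depending on a parameter $t$ so that some mixed discriminants are very small relative to others. Rank-one-like matrices $B_j=v_jv_j^*+\varepsilon I$ are a natural choice, since they give $c_{ij}\approx|\det(v_i,v_j)|^2+O(\varepsilon)$. This lets the $c$'s approximate a prescribed pattern with some entries near zero. I would then show by an intermediate value argument along a one-parameter family that the block determinant changes sign. For $t$ near the diagonal case it is nonzero with the sign dictated by Timorin's theorem, \cite[Proposition~2.2]{DN}; at the degenerate end it has the opposite sign, as in Ross--Toma. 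Continuity then gives a positive definite family member with a singular block. This sign-change step is the main obstacle. I would first attempt a computer-algebra computation of the determinant for a symmetric ansatz, for instance with the $v_j$ chosen with a symmetry group acting on the $6$ indices so that the block splits into small isotypic pieces, and then exhibit an explicit parameter value.

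Step three is the extension. For general $n\geqslant6$ and $2\leqslant k\leqslant n-4$, first raise $k$ by adding new coordinates $z_{n+1}$ with $B_{n+1}$ included as a diagonal entry. Concretely, replace $M$ by $\diag(M,V_{n+1})$, or more precisely take block sum matrices. Then $\det$ gets multiplied by $V_{n+1}$, and the relevant Lefschetz block is unchanged. This keeps SD and Griffiths positivity, since the new $B_j$ are block diagonal with positive entries. Raise $n$ with $k$ fixed by adding directions $z_{m}$ on which $M$ is $\varepsilon B$. Restricting the kernel element appropriately, or wedging it with $dz_m\wedge d\bar z_m$ in the $q$-slot, then produces a kernel vector in bidegree $(2,n-k-2)$ in the limit. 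Exact singularity is recovered by again choosing parameters via continuity, or more cleanly by exhibiting the kernel $\alpha\wedge \bigwedge d\bar z_m$ directly when the added coefficients make the block factor. Finally, a singular Lefschetz map excludes membership in the Hodge--Riemann cone by definition.
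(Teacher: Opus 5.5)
Your proposal has the same overall structure as the paper (a seed at $(n,k)=(6,2)$, then extensions), but each of its three steps has a problem, and two of them are genuine gaps.

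\textbf{Steps 1 and 2: wrong block, and the seed is never constructed.} Your Step~1 isolates the wrong block. If $A\cap B=\varnothing$, then $dz_A\wedge d\bar z_B\wedge V_I\ne0$ forces $I=[n]\setminus(A\cup B)$. So on these vectors $L^{2,n-k-2}_{\det M}$ is diagonal with entries $c_C>0$; these are the scalar blocks $\HH_0^J$. This block is never singular, and the equation you propose to solve has no solution with positive coefficients. The block that must be made singular is the one with $|A\cap B|=2$. For $(n,k)=(6,2)$ it is the $15\times15$ matrix on the $V_S$, $S\in\binom{[6]}2$, whose $(R,S)$ entry is $c_{[6]\setminus(R\cup S)}$ when $R\cap S=\varnothing$ and $0$ otherwise.

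More seriously, Step~2 is the whole content of the theorem, and it is not carried out. You call the sign change ``the main obstacle'' and assert the opposite sign at the degenerate end without exhibiting a family or computing anything. The missing idea is an exact determinantal realization of the Ross--Toma pattern. Let $W_j=V_{2j-1}+V_{2j}$, and let $S_1,S_2,S_3$ be traceless real symmetric $2\times2$ matrices with eigenvalues $\pm1$ and $\tr(S_iS_j)=-1$ for $i\ne j$. Then:
\begin{itemize}
\item $\widehat M_t=\sum_{j=1}^3(I_2+\sqrt t\,S_j)W_j$ satisfies SD with positive definite coefficients for $0\leqslant t<1$;
\item its determinant is $(1-t)\Sigma+(2+t)\Pi$, where $\Sigma=\sum_jW_j^2$ and $\Pi=\sum_{i<j}W_iW_j$;
\item $\Pi-u_*\Sigma$ with $u_*=1+\sqrt6/2$ is an explicit kernel vector of $L^{2,2}$ at $t_*=\sqrt6-2$.
\end{itemize}
Your rank-one ansatz would have led here. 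Take $B_{2j-1}=B_{2j}=v_jv_j^{\mathsf T}+\varepsilon I_2$ with $v_1,v_2,v_3$ real unit vectors at angles $0^\circ,60^\circ,120^\circ$. This gives $(\tfrac12+\varepsilon)\bigl(I_2+(1+2\varepsilon)^{-1}S_j\bigr)$, which is this family up to a positive factor. The computation still has to be done, though.

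\textbf{Step 3: the rank increase breaks positivity.} The matrix $\diag(M,V_{n+1})$ is not Griffiths positive. Its SD coefficient matrices $\diag(B_j,0)$ and $\diag(0,\ldots,0,1)$ are only semidefinite, and $\theta=e_{k+1}$ gives $V_{n+1}$, which is not K\"ahler. The kernel vector survives only because $\det M\wedge V_{n+1}$ is degenerate. Naive continuity from this limit gives nothing once you perturb back to positivity. In every block $\HH_2^J$ with $n+1\in J$, the rows indexed by pairs $R\ni n+1$ vanish for every parameter value, so the limiting determinant is identically zero and carries no sign information.

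The paper instead adjoins the full standard form, using $\diag(\widehat M_t+\varepsilon\omega_{\mathrm{new}}I_2,\ \omega_{0,N}I_m)$. At $\varepsilon=0$ it proves that the middle block $\mathsf K_m(t)$ of $\Omega_t\wedge\omega_{0,N}^m/m!$ has determinants of opposite signs at $t=0$ and $t=\tfrac12$. A Schur complement gives $\det\mathsf K_{m+1}=\det\mathsf K_m\cdot\det\bigl(-\tfrac{m+2}{m+1}R_m\bigr)$. An explicit eigenvalue count shows that $R_m$ has signature $(1,N-1)$ on the whole interval, so the second factor is negative and never vanishes. Only then do a small $\varepsilon>0$ and the intermediate value theorem give an exactly singular member.

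Your increase of $n$ with $k$ fixed is fine, provided you wedge with $d\bar z_m$ alone; wedging with $dz_m\wedge d\bar z_m$ would change $p$. Every new determinant term contains $V_m$, so $\alpha\wedge d\bar z_m$ is an exact kernel vector and no limit is needed.
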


For these same matrices, Theorem~\ref{thm:sd} still guarantees the Hodge--Riemann property in bidegree $(1,n-k-1)$.
Thus SD recovers the Hodge--Riemann property in the first mixed bidegree in every rank, but it does not suffice for all bidegrees when $n-k\geqslant4$.

For Theorem~\ref{thm:sdnegative}, we present examples whose underlying idea comes from Ross--Toma's construction \cite[Example~9.2, p.~37 of arXiv:1905.13636v3]{RT}.
The six-dimensional seed first appeared there as a second Chern class of an ample real-twisted rank-three bundle on $(\mathbf P^2)^3$.
The examples presented here give a rank-two determinant realization of that seed, after positive rescaling and a change of parameter, and extensions in rank and dimension.
Theorem~\ref{thm:sdnegative} was also obtained using AI.
Section~\ref{sec:sdnegative} gives the exact comparison with Ross--Toma.

Enhan Li, of the University of Science and Technology of China (USTC), independently found a counterexample to Dinh--Nguy\^en's question for $k=2$ and every $n\geqslant4$.

For general Griffiths positive matrices, the remaining cases of Dinh--Nguy\^en's question are $n=k\geqslant4$ and $n=k+1\geqslant5$.
In these boundary degrees the question is equivalent to strict weak positivity of the determinant: the top-Chern case of Griffiths' positivity question, or Finski's double mixed discriminant problem.
Under either Nakano positivity or dual Nakano positivity, these boundary cases are affirmative in every rank.
Section~\ref{sec:scope} explains the equivalence and the known cases.
Question~\ref{qu:griffiths-determinant} states the determinant positivity problem for $k\geqslant4$.

The paper is organized as follows.
Section~\ref{sec:preliminaries} gives the definitions and the deformation and extension arguments.
Section~\ref{sec:unrestricted} proves Theorem~\ref{thm:main}.
Section~\ref{sec:sdpositive} develops the dually Lorentzian polynomial argument and proves Theorem~\ref{thm:sd}.
Section~\ref{sec:sdnegative} proves Theorem~\ref{thm:sdnegative}.
Section~\ref{sec:scope} formulates two open questions, and Appendix~\ref{sec:verification} contains exact Python verification programs.

\section{Preliminaries}\label{sec:preliminaries}

\subsection{Lefschetz and Hodge--Riemann forms}\label{sec:HR}

Let $V$ be a complex vector space of dimension $n$.
In the linear setting, a real $(1,1)$-form $\omega$ is K\"ahler if it is positive definite, equivalently if some complex coordinates satisfy
\[
 \omega=\frac{\ii}{2}\sum_{j=1}^n
 dz_j\wedge d\bar z_j.
\]
Fix such an $\omega$.
We now recall the pointwise Hodge--Riemann definition using the Lefschetz maps introduced above.

\begin{definition}[Dinh--Nguy\^en {\cite[Definition~2.1]{DN}}]
\label{def:HR-cone}
Let $p,q,k$ be nonnegative integers with $p+q=n-k$.
A real $(k,k)$-form $\Omega$ is \emph{Lefschetz for $(p,q)$} if $L_{\Omega}^{p,q}$ is an isomorphism.
A real $(k,k)$-form $\Omega$ is \emph{Hodge--Riemann for $(p,q)$} if there is a continuous path of real forms $\Omega_t$, $0\leq t\leq1$, with $\Omega_0=\Omega$ and $\Omega_1=\omega^k$, such that
\[
 \Omega_t\wedge\omega^{2r}
 \quad\text{is Lefschetz for }(p-r,q-r)
\]
for every integer $0\leq r\leq\min\{p,q\}$ and every $0\leq t\leq1$.
The corresponding connected component is the Hodge--Riemann cone for $(p,q)$.
The form is called Hodge--Riemann if it has this property for every $p+q=n-k$.
\end{definition}

In particular, a singular $L_\Omega^{p,q}$ excludes the Hodge--Riemann property, regardless of the reference form $\omega$: the defining condition would already fail at $t=0$ and $r=0$.

\subsection{Griffiths, Nakano, and dual Nakano positivity}\label{sec:positivity}

Write a Hermitian matrix of $(1,1)$-forms as
\begin{equation}\label{eq:coefficient-tensor}
 \alpha_{ab}=\frac{\ii}{2}
 \sum_{\mu,\nu=1}^n
 A_{a\bar b\mu\bar\nu}\,
 dz_\mu\wedge d\bar z_\nu.
\end{equation}
The coefficients define a Hermitian form $A$ on $\C^k\otimes\C^n$.
The Griffiths condition stated in the introduction is equivalently
\[
 A(\theta\otimes\xi,\theta\otimes\xi)>0
 \quad(0\ne\theta\in\C^k,\ 0\ne\xi\in\C^n).
\]
The matrix $M$ is Nakano positive precisely when
\[
 \sum_{a,b,\mu,\nu}
 A_{a\bar b\mu\bar\nu}u_{a\mu}\overline{u_{b\nu}}>0
 \quad(0\ne u\in\C^k\otimes\C^n).
\]
These are the standard algebraic Griffiths and Nakano conditions; see Griffiths \cite[p.~185, (0.1)]{Griffiths} and Nakano \cite[p.~8, (2.10)]{Nakano}.

The matrix $M$ is \emph{dual Nakano positive} if its transpose $M^{\mathsf T}$ is Nakano positive, that is, if
\[
 \sum_{a,b,\mu,\nu}
 A_{b\bar a\mu\bar\nu}u_{a\mu}\overline{u_{b\nu}}>0
 \quad(0\ne u\in\C^k\otimes\C^n).
\]
Thus dual Nakano positivity tests the partial transpose of the coefficient tensor in the matrix indices; see \cite[Section~2.1]{Finski}.

Testing the Nakano inequality on $u=\theta\otimes\xi$ shows that Nakano positivity implies Griffiths positivity.
Consequently, the determinant of a Nakano positive matrix belongs to the Griffiths cone.

Because $(1,1)$-forms have even total degree, their wedge products commute.
Thus the ordinary permutation formula defines $\det M$.
For Hermitian $M$, the determinant is a real $(k,k)$-form.
In particular, $\det M^{\mathsf T}=\det M$, so transposition preserves every Lefschetz map of the determinant and exchanges the two Nakano positivity conditions.

We use $I_d$ for the $d\times d$ identity matrix and set
\[
 E_{ij}=\frac{\ii}{2}dz_i\wedge d\bar z_j,\qquad V_j=E_{jj},
 \qquad \omega_0=\sum_{j=1}^n V_j.
\]
The form $\omega_0$ is the standard positive form in the chosen coordinates; $\omega$ continues to denote an arbitrary positive reference form.
Write $[n]=\{1,\ldots,n\}$.
For $I\subset[n]$ and indeterminates $x_1,\ldots,x_n$, put
\[
 V_I=\prod_{i\in I}V_i,\qquad x_I=\prod_{i\in I}x_i.
\]
Products of the $V_i$ are computed in $\R[V_1,\ldots,V_n]/(V_1^2,\ldots,V_n^2)$.
We write $\binom{J}{r}=\{R\subset J:|R|=r\}$.
For index sets, $dz_I$ and $d\bar z_I$ denote the corresponding wedge products in increasing order.
Empty products equal $1$.
For a complex matrix $U$, $U^*$ denotes its conjugate transpose and $U^{\mathsf T}$ its transpose.
The signature of a nonsingular real symmetric or Hermitian matrix is the pair consisting of its numbers of positive and negative eigenvalues.

\subsection{SD coefficients and mixed discriminants}

Write $V_j=(\ii/2)\,dz_j\wedge d\bar z_j$.
The simultaneous diagonalizability condition means that
\[
 M=\sum_{j=1}^n B_jV_j,
 \qquad B_j=B_j^*.
\]
It concerns the form coordinates; the coefficient matrices $B_j$ need not commute.
Under SD one has
\[
\begin{aligned}
 M\text{ Griffiths positive}
 &\Longleftrightarrow B_j>0\text{ for every }j\\
 &\Longleftrightarrow M\text{ Nakano positive}\\
 &\Longleftrightarrow M\text{ dual Nakano positive}.
\end{aligned}
\]
The first equivalence is Chen \cite[Proposition~4.1]{Chen}.
For the other two, order the tensor-product basis by the form index: the Nakano and dual Nakano coefficient matrices are block diagonal with blocks $B_j$ and $B_j^{\mathsf T}$, respectively.
Assuming these equivalent positivity conditions, put
\[
 \Omega=\det M=\sum_{|I|=k}d_IV_I,
 \qquad d_I=k!D_k(B_i:i\in I)>0.
\]
Here $D_k$ is the mixed discriminant, the symmetric polarization of the determinant studied by Alexandrov \cite{Alexandrov}, with normalization
\[
 D_k(A_1,\ldots,A_k)=\frac1{k!}
 \left.
 \frac{\partial^k}{\partial t_1\cdots\partial t_k}
 \det\left(\sum_{\ell=1}^k t_\ell A_\ell\right)
 \right|_{t=0}.
\]
In particular, $D_k(A,\ldots,A)=\det A$.
Weizhe Zheng proved the bounds, recorded in Chen \cite[Theorem~4.3]{Chen},
\[
 \prod_{\ell=1}^k\lambda_{\min}(A_\ell)
 \leqslant D_k(A_1,\ldots,A_k)
 \leqslant \prod_{\ell=1}^k\lambda_{\max}(A_\ell)
\]
for positive definite Hermitian matrices.
They give $d_I>0$, the positivity used below.

\subsection{From Lefschetz isomorphisms to Hodge--Riemann forms}

Definition~\ref{def:HR-cone} requires more than invertibility at a single form: the relevant Lefschetz maps must remain invertible along a path to the power of a reference form.
A related deformation argument for primitive definiteness in bidegree $(1,n-k-1)$ appears in Berndtsson and Sibony \cite[Proposition~9.1]{BS}.
For SD matrices, Chen's deformation argument \cite[Section~2, following Question~2.4]{Chen} provides such a path once the mixed Lefschetz isomorphism is known throughout the class.
The following lemma makes this reduction precise and allows an arbitrary positive reference form, which need not be diagonal in the coordinates of~\eqref{eq:sd}.

\begin{lemma}\label{lem:sd-reduction}
Let $n\geqslant4$, $2\leqslant k\leqslant n-2$, and let $M=\sum_jB_jV_j$, where every $B_j$ is positive definite.
With the standard form $\omega_0=\sum_jV_j$, set
\[
 M_t=(1-t)M+t\omega_0 I_k,\qquad \Omega_t=\det M_t,
 \qquad 0\leqslant t\leqslant1.
\]
If $L_{\Omega_t}^{1,n-k-1}$ is an isomorphism for every $t$, then $\det M$ is Hodge--Riemann in every bidegree $p+q=n-k$ with $\min(p,q)\leqslant1$, relative to any positive reference form $\omega$.
Consequently, a proof of these Lefschetz isomorphisms for all SD matrices with positive definite coefficient matrices implies the corresponding Hodge--Riemann assertion for the entire class.
\end{lemma}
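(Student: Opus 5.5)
The plan is to build the required path in Definition~\ref{def:HR-cone} by concatenating two legs. The first leg is the given path $\Omega_t=\det M_t$ from $\det M$ to $\omega_0^k$. The second leg is a path $s\mapsto\omega_s^k$ with $\omega_s=(1-s)\omega_0+s\omega$, running from $\omega_0^k$ to $\omega^k$.

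Along the first leg I first record the structure of $\Omega_t$. We have $M_t=\sum_j B_j(t)V_j$ with $B_j(t)=(1-t)B_j+tI_k$, which is positive definite for every $t\in[0,1]$. Hence $\Omega_t=\sum_{|I|=k}d_I(t)V_I$ with $d_I(t)=k!D_k(B_i(t):i\in I)>0$, by Zheng's lower bound. In particular $\Omega_t$ is a strictly strongly positive $(k,k)$-form, uniformly in $t$.

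\textbf{Conditions along the first leg.} Fix $p+q=n-k$ with $\min(p,q)\leqslant1$. I treat the cases as follows.
\begin{itemize}
\item[(a)] If $\min(p,q)=0$, say $(p,q)=(0,n-k)$, then only $r=0$ occurs. We need $L^{0,n-k}_{\Omega_t}$ to be an isomorphism. Since source and target have equal dimension, injectivity suffices. For $0\neq\alpha\in V^{0,m}$ with $m=n-k$, the form $c_m\,\alpha\wedge\overline\alpha$ (with the standard unimodular constant $c_m$) is a nonzero positive $(m,m)$-form. Its wedge with the strictly strongly positive form $\Omega_t$ is then a strictly positive top form. So $\alpha\wedge\Omega_t\neq0$. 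The case $(n-k,0)$ follows by conjugation.
\item[(b)] If $\min(p,q)=1$, say $p=1$, then $r\in\{0,1\}$. For $r=0$, the map $L^{1,n-k-1}_{\Omega_t}$ is an isomorphism by hypothesis. The conjugate case $(n-k-1,1)$ follows because $\Omega_t$ is real, so conjugation intertwines the two Lefschetz maps. For $r=1$, we need $\Omega_t\wedge\omega^2$ to be Lefschetz for $(0,n-k-2)$. This is the same positivity argument as in (a), since $\Omega_t\wedge\omega^2$ is again strictly strongly positive. When $n-k=2$ it reduces to $\Omega_t\wedge\omega^2$ being a positive volume form.
\end{itemize}
The reference form $\omega$ is arbitrary here. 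It enters only through $\omega^{2r}$, which is strongly positive, so nondiagonality of $\omega$ in the coordinates of~\eqref{eq:sd} causes no problem.

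\textbf{Second leg.} Each $\omega_s$ is K\"ahler. The condition to check is that $\omega_s^k\wedge\omega^{2r}$ is Lefschetz for $(p-r,q-r)$. This is a product of K\"ahler forms, so the mixed hard Lefschetz theorem of Timorin \cite{Timorin} (as recorded in \cite[Proposition~2.2]{DN}) gives the isomorphism for every $s$ and every $r$, with no restriction on $\min(p,q)$. The concatenated path is continuous. It starts at $\det M$, ends at $\omega^k$, and satisfies all the required conditions. This proves the first assertion.

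\textbf{The consequence.} The path $M_t$ stays inside the class of SD matrices with positive definite coefficients. So if the Lefschetz isomorphism in bidegree $(1,n-k-1)$ holds for all such matrices, the hypothesis of the lemma holds automatically for every $M$ in the class, and the Hodge--Riemann assertion follows for the whole class.

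\textbf{Main obstacle.} The delicate step is the strict positivity in case (a). One must fix the sign and normalization $c_m$ so that $c_m\,\alpha\wedge\overline\alpha$ is genuinely positive. One must then justify that wedging a nonzero positive form with a strictly strongly positive complementary form gives a strictly positive top form. I would verify this by diagonalizing with respect to $\omega_0$ and expanding $\alpha$ in the basis $d\bar z_J$: the diagonal terms contribute $d_I(t)\,|\alpha_J|^2$ with positive weights, while the cross terms vanish against the monomials $V_I$.
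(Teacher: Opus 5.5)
Your proposal is correct and follows essentially the same route as the paper. Both use the path $\det M_t$ to $\omega_0^k$ followed by $((1-u)\omega_0+u\omega)^k$, handle the pure-bidegree maps by positivity and the mixed maps by the hypothesis plus conjugation, and cover the final leg with Timorin's theorem. The one point you assert rather than check is that $\Omega_t\wedge\omega^2$ is strictly strongly positive when $\omega$ is non-diagonal. The paper verifies exactly this via the decomposition $\Omega_t\wedge\omega^2-a_tb^2\omega_0^{k+2}=(\Omega_t-a_t\omega_0^k)\wedge\omega^2+a_t\omega_0^k\wedge(\omega-b\omega_0)\wedge(\omega+b\omega_0)$.
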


\begin{proof}
Every coefficient matrix $(1-t)B_j+tI_k$ is positive definite, so $\Omega_t$ has strictly positive coefficients in the basis $V_I$.
Thus the pure-bidegree maps $L_{\Omega_t}^{0,n-k}$ and its conjugate are isomorphisms: their associated Hermitian pairings are diagonal and positive definite.
The assumed mixed map and its conjugate give all remaining bidegrees with $\min(p,q)\leqslant1$.

For $\min(p,q)=1$, the cone definition also requires the pure-bidegree maps of $\Omega_t\wedge\omega^2$.
Choose $a_t,b>0$ such that $\Omega_t-a_t\omega_0^k$ is a nonnegative linear combination of the $V_I$ and $\omega-b\omega_0$ is positive semidefinite.
Then
\begin{align*}
 \Omega_t\wedge\omega^2-a_tb^2\omega_0^{k+2}
 &=(\Omega_t-a_t\omega_0^k)\wedge\omega^2\\
 &\quad+a_t\omega_0^k\wedge(\omega-b\omega_0)\wedge(\omega+b\omega_0)
\end{align*}
is a nonnegative linear combination of products of positive semidefinite $(1,1)$-forms.
Writing their factors as sums of rank-one forms shows that their associated pairings on pure forms are nonnegative.
The term $a_tb^2\omega_0^{k+2}$ makes the pairing strictly positive, proving the required invertibility.

The path ends at $\omega_0^k$.
Append the path $((1-u)\omega_0+u\omega)^k$, $0\leqslant u\leqslant1$, to $\omega^k$.
Timorin's linear mixed Hodge--Riemann theorem \cite{Timorin} guarantees all required isomorphisms on this final path.
\end{proof}

\subsection{Extending a kernel to higher dimension}

\begin{lemma}\label{lem:dimension}
Suppose a Nakano positive $k\times k$ matrix $M$ on $\C^N$ has $0\ne\alpha\in\ker L_{\det M}^{p,q}$, where $p+q=N-k$.
On $\C^n$, $n>N$, put
\[
 \widetilde M=M+I_k\sum_{j=N+1}^nV_j,\qquad
 \widetilde\alpha=\alpha\wedge d\bar z_{N+1}\wedge\cdots\wedge d\bar z_n.
\]
Then $\widetilde M$ is Nakano positive and $0\ne\widetilde\alpha\in\ker L_{\det\widetilde M}^{p,q+n-N}$.
The construction preserves SD and, if present, dual Nakano positivity.
\end{lemma}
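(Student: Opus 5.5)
The proof splits into three checks: Nakano positivity of $\widetilde M$, the kernel statement, and preservation of SD and dual Nakano positivity.

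\emph{Positivity.} Write the coefficient tensor of $\widetilde M$ on $\C^k\otimes\C^n=(\C^k\otimes\C^N)\oplus(\C^k\otimes\C^{n-N})$, where the splitting is by form index. The tensor is block diagonal. The first block is the tensor $A$ of $M$, and the second block is $\delta_{ab}\delta_{\mu\nu}$ for $\mu,\nu>N$. Both blocks are positive definite, so $\widetilde M$ is Nakano positive.

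For dual Nakano positivity the same splitting applies to the partial transpose. The new block $\delta_{ab}\delta_{\mu\nu}$ is invariant under $a\leftrightarrow b$, so the partial transpose of $\widetilde M$ is again block diagonal, with blocks the partial transpose of $A$ and the identity. Hence dual Nakano positivity of $M$ passes to $\widetilde M$.

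SD is preserved because $\widetilde M=\sum_{j\le N}B_jV_j+\sum_{j>N}I_kV_j$ is still of the form \eqref{eq:sd}.

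\emph{Kernel.} Put $W=\sum_{j>N}V_j$. Since $(1,1)$-forms commute, I expand $\det(M+WI_k)$ multilinearly in the columns:
\[
 \det\widetilde M=\sum_{r=0}^k W^r\,\sigma_{k-r}(M),
\]
where $\sigma_{k-r}(M)$ is the sum of the principal $(k-r)$-minors of $M$. This is the characteristic-polynomial identity, valid in the commutative even subalgebra.

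Each $\sigma_{k-r}(M)$ involves only $dz_\mu,d\bar z_\mu$ with $\mu\le N$. For $r\ge1$, the factor $W^r$ contains some $d\bar z_j$ with $j>N$. This is killed by $d\bar z_{N+1}\wedge\cdots\wedge d\bar z_n$, which is a factor of $\widetilde\alpha$. Hence
\[
 \widetilde\alpha\wedge\det\widetilde M=\pm\,(\alpha\wedge\det M)\wedge d\bar z_{N+1}\wedge\cdots\wedge d\bar z_n=0 .
\]
The only care needed is the sign from commuting forms, which is irrelevant for vanishing.

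\emph{Nonvanishing and bidegree.} $\alpha$ involves only the first $N$ coordinates, so $\widetilde\alpha\ne0$. Its bidegree is $(p,q+n-N)$, with $p+q+n-N=n-k$, as required.

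There is no real obstacle here. The one point needing care is that the determinant expansion of $M+WI_k$ is legitimate for commuting entries, and that every term with $r\geq1$ contains some $d\bar z_j$ with $j>N$.
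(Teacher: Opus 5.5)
Your proof is correct and follows the same route as the paper: the Nakano tensor and its partial transpose acquire identity blocks in the new directions, SD is visibly preserved, and every term of $\det\widetilde M$ other than $\det M$ contains some $V_j$ with $j>N$, which $\widetilde\alpha$ annihilates. Your expansion $\det(M+WI_k)=\sum_r W^r\sigma_{k-r}(M)$ simply spells out the paper's one-line determinant argument.
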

\begin{proof}
The Nakano tensor acquires positive identity blocks, as does its partial transpose.
Every new determinant term contains a $V_j$ annihilated by $\widetilde\alpha$.
The displayed expression also preserves SD.
\end{proof}

\section{Counterexamples in bidegree \texorpdfstring{$(1,n-k-1)$}{(1,n-k-1)}}\label{sec:unrestricted}

\begin{proof}[Proof of Theorem~\ref{thm:main}]
Using the coordinate forms fixed in Section~\ref{sec:positivity}, put
\[
 \mathcal N=\begin{pmatrix}
 V_1+V_3&E_{12}+E_{34}\\
 E_{21}+E_{43}&V_2+V_4
 \end{pmatrix},
 \qquad
 M_t=\diag(\omega_0 I_2+t\mathcal N,\omega_0 I_{k-2}),
 \qquad t\geqslant0.
\]
The last block is omitted when $k=2$.
If $e_a$ and $f_j$ are the standard bases in the matrix and coordinate directions, respectively, the Nakano matrix of $M_t$ is the positive definite matrix
\[
 I_{kn}+t(w_1w_1^*+w_2w_2^*),
 \qquad
 w_1=e_1\otimes f_1+e_2\otimes f_2,
 \quad
 w_2=e_1\otimes f_3+e_2\otimes f_4.
\]
Thus Nakano positivity holds for every $t\geqslant0$.

We first take $n=k+2$ and split the coordinate space as $\C^{k+2}=U\oplus W$, where $U$ consists of the first four coordinate directions and $W$ of the remaining $k-2$.
Set
\[
 \omega_U=V_1+V_2+V_3+V_4,
 \qquad \omega_W=\sum_{j=5}^{k+2}V_j,
 \quad \Delta=\det\mathcal N,
 \qquad \gamma=E_{13}-E_{31}+E_{24}-E_{42}.
\]
The kernel follows from two exterior-algebra identities:
\begin{equation}\label{eq:explicit-skew-identities}
 \gamma\wedge\Delta=-\frac12\gamma\wedge\omega_U^2,
 \qquad \gamma\wedge\omega_U^3=0.
\end{equation}
For verification, expand
\[
 \Delta=2V_1V_2+2V_3V_4+V_1V_4+V_2V_3
       -E_{12}E_{43}-E_{34}E_{21},
\]
where products denote wedges.
Both sides of the first identity are
\[
 -(E_{13}-E_{31})V_2V_4-(E_{24}-E_{42})V_1V_3.
\]
Since $\tr\mathcal N=\omega_U$, the determinant is
\[
 \det M_t=\omega_0^k+t\omega_U\wedge\omega_0^{k-1}
                      +t^2\Delta\wedge\omega_0^{k-2}.
\]
Expand $\omega_0=\omega_U+\omega_W$.
Only terms with $U$-degree $(2,2)$ survive multiplication by $\gamma$: smaller $U$-degree would exceed the dimension of $W$.
For larger $U$-degree, $\gamma\wedge\omega_U^3=0$ and $\gamma\wedge\Delta\wedge\omega_U =-\tfrac12\gamma\wedge\omega_U^3=0$.
Hence
\begin{equation}\label{eq:explicit-unrestricted-kernel}
 \gamma\wedge\det M_t
 =\left(\binom{k}{2}+(k-1)t-\frac{t^2}{2}\right)
   \gamma\wedge\omega_U^2\wedge\omega_W^{k-2}.
\end{equation}
The scalar vanishes at the explicit positive parameter
\[
 t_k=k-1+\sqrt{(k-1)(2k-1)}.
\]
Since $\gamma\ne0$, this proves that $L_{\det M_{t_k}}^{1,1}$ has a nonzero kernel on $\C^{k+2}$.

For $n>k+2$, apply Lemma~\ref{lem:dimension}: use the same matrix $M_{t_k}$ with the full form $\omega_0=\sum_{j=1}^nV_j$, and put
\[
 \widetilde\gamma
 =\gamma\wedge d\bar z_{k+3}\wedge\cdots\wedge d\bar z_n.
\]
Every determinant term involving a new coordinate contains some $V_j$ with $j>k+2$ and therefore vanishes after multiplication by $\widetilde\gamma$.
Equation~\eqref{eq:explicit-unrestricted-kernel} then gives $\widetilde\gamma\wedge\det M_{t_k}=0$.
Its bidegree is $(1,n-k-1)$, as required.
Finally, Nakano positivity implies Griffiths positivity, while a singular Lefschetz map excludes membership in the Hodge--Riemann cone.
\end{proof}

The failure of dual Nakano positivity can be seen directly from the spectrum.
The partial transpose of the Nakano matrix has eigenvalues $1+t$ with multiplicity six, $1-t$ with multiplicity two, and $1$ with multiplicity $kn-8$.
Indeed, each of the two rank-one terms $w_jw_j^*$ has, on its four-dimensional tensor block, partial transpose with eigenvalues $1,1,1,-1$.
Since $t_k\geqslant1+\sqrt3>1$, the matrix $M_{t_k}$ is not dual Nakano positive.
Its transpose is dual Nakano positive but is not Nakano positive, and has the same determinant.
Hence neither matrix answers Question~\ref{qu:simultaneous-nakano} when $n=4$ and $k=2$.

\section{Dually Lorentzian polynomials and the SD condition}
\label{sec:sdpositive}

Throughout this section, $M=\sum_jB_jV_j$, where every $B_j$ is positive definite, and $\Omega=\det M=\sum_{|I|=k}d_IV_I$ as in Section~\ref{sec:preliminaries}.
We first identify the blocks of the Lefschetz maps, then use an auxiliary dually Lorentzian polynomial and the generalized Alexandrov--Fenchel inequality to determine the signature of the blocks occurring in bidegree $(1,n-k-1)$.

\subsection{A decomposition of the Lefschetz map}

For an integer $r\geqslant0$ and $J\subset[n]$ with $|J|=k+2r$, define a matrix indexed by the $r$-subsets of $J$:
\[
 \HH_r^J=\left(
 \begin{cases}
 d_{J\setminus(R\cup S)},&R\cap S=\varnothing,\\
 0,&R\cap S\ne\varnothing
 \end{cases}\right)_{R,S\in\binom Jr}.
\]
In particular, $\HH_0^J=(d_J)$.

\begin{proposition}\label{prop:sd-blocks-new}
For $p+q=n-k$, the map $L_\Omega^{p,q}$ is a direct sum of the matrices $\HH_r^J$, with $0\leq r\leq\min(p,q)$.
A block $\HH_r^J$ occurs once for each ordered partition
\[
 [n]\setminus J=H\sqcup K,
 \qquad |H|=p-r,\quad |K|=q-r.
\]
Consequently, $L_\Omega^{p,q}$ is invertible if and only if all these matrices are nonsingular.
\end{proposition}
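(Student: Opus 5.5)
We work in the monomial basis $dz_A\wedge d\bar z_B$ with $|A|=p$, $|B|=q$, and grade everything by a weight that the map must preserve. Since $\Omega=\sum_{|I|=k}d_IV_I$ and $V_I$ is, up to a sign, $dz_I\wedge d\bar z_I$, we have
\[
 dz_A\wedge d\bar z_B\wedge V_I=\pm\,dz_{A\cup I}\wedge d\bar z_{B\cup I}
\]
when $I\cap(A\cup B)=\varnothing$, and zero otherwise. The target basis element is $dz_{A'}\wedge d\bar z_{B'}$ with $|A'|=n-q$ and $|B'|=n-p$.

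The natural invariants of a source monomial are
\[
 R=A\cap B,\qquad H=A\setminus B,\qquad K=B\setminus A,\qquad J=[n]\setminus(H\cup K).
\]
Then $|R|=r\leqslant\min(p,q)$, $|H|=p-r$, $|K|=q-r$, and $|J|=n-p-q+2r=k+2r$. A nonzero term requires $I\subset[n]\setminus(A\cup B)$, and this complement has size $k+r-|R|$. Equality with $k$ forces $|R|=r$, so $I=J\setminus R$ is the unique index set contributing. However, I will let $R$ range over a fixed $J$, rather than fixing $R$ first.

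The plan is to fix the data $(H,K)$, hence $J$ and $r$, and consider the source subspace spanned by the monomials with $A=H\sqcup R$ and $B=K\sqcup S$, where $R,S\in\binom Jr$.

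Such a monomial need not have $A\cap B$ of size $r$, so I should regroup. The correct invariant is the set of coordinates appearing only holomorphically or only antiholomorphically together with the sizes. I will define the block by fixing $H$, fixing $K$ disjoint from $H$, and letting $R,S$ be $r$-subsets of $J=[n]\setminus(H\cup K)$. The monomial $dz_{H\cup R}\wedge d\bar z_{K\cup S}$ is mapped to
\[
 \pm\sum_I d_I\,dz_{H\cup R\cup I}\wedge d\bar z_{K\cup S\cup I},
\]
where $I\subset J\setminus(R\cup S)$ and $|I|=k$. For $R\cap S=\varnothing$ this forces $I=J\setminus(R\cup S)$, since $|J\setminus(R\cup S)|=k$. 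For $R\cap S\neq\varnothing$ the complement is too small and the term vanishes. The output equals $dz_{[n]\setminus(K\cup S')}\wedge d\bar z_{[n]\setminus(H\cup R')}$ with $S'=J\setminus(R\cup I)=S$ and $R'=R$. Indexing target monomials by the pair $(S,R)$ therefore turns the block into the matrix $\HH_r^J$, up to signs.

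Every source monomial lies in exactly one such block. Given $(A,B)$, take $H=A\setminus B$, $K=B\setminus A$, and $R=S=A\cap B$; the pair $(R,S)$ lies on the diagonal. So the partition $(H,K,J)$ is determined by the monomial, and the map sends distinct blocks to distinct target spans, because $(H,K)$ is recoverable from the target in the same way. This gives the direct sum, with one block for each ordered partition of $[n]\setminus J$, exactly as in the statement.

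The main obstacle is the signs. The reordering sign of $dz_{H\cup R\cup I}\wedge d\bar z_{K\cup S\cup I}$ depends on $(R,S)$, and $V_I$ also introduces sign factors. I need to show that these signs factor as $\epsilon(R)\,\epsilon'(S)$, so that the block equals $D_1\HH_r^JD_2$ with $D_1,D_2$ diagonal sign matrices. This preserves nonsingularity, and in fact for Theorem~\ref{thm:sd} a congruence-type form is needed. I would first track signs by writing each form as a product of $dz_j$ and $d\bar z_j$ in the order of $j$. Since $V_j$ is even, I would then move pairs $dz_j\wedge d\bar z_j$ freely and check that the residual sign depends separately on the source and target labels.
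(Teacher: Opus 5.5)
There is a genuine gap, and it starts with a miscount in your first paragraph. Take a source monomial $dz_A\wedge d\bar z_B$ with $R=A\cap B$ and $|R|=r$. Then $|A\cup B|=p+q-r$, so $[n]\setminus(A\cup B)=J\setminus R$ has $k+r$ elements, not $k$; in particular $J\setminus R$ is not a $k$-set when $r\geqslant1$. So $I$ is not unique: every $k$-subset $I\subset J\setminus R$ contributes. Up to normalization of the bases, the image is a sum over the $\binom{k+r}{r}$ sets $R'\in\binom{J\setminus R}{r}$ with coefficients $d_{J\setminus(R\cup R')}$. That sum is exactly one column of $\HH_r^J$, and it is the only source of nontrivial blocks, and hence of the singularities in Theorem~\ref{thm:sdnegative}. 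The same miscount reappears later: $|J\setminus(R\cup S)|=k+|R\cap S|$, so for $R\cap S\neq\varnothing$ the complement is too large, not too small, and the term does not vanish.

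As a result, your block is the wrong space. Your block is spanned by $dz_{H\cup R}\wedge d\bar z_{K\cup S}$ over pairs $(R,S)\in\binom Jr\times\binom Jr$, so it has dimension $\binom{k+2r}{r}^2$, whereas $\HH_r^J$ is $\binom{k+2r}{r}\times\binom{k+2r}{r}$. These spaces also overlap. For disjoint $R\neq S$, the monomial $dz_{H\cup R}\wedge d\bar z_{K\cup S}$ has $A\cap B=\varnothing$ and belongs to the $r=0$ block with data $(H\cup R,K\cup S)$, so you do not get a direct sum. Nor is $\HH_r^J$ obtained by ``indexing targets by $(S,R)$'': its rows and its columns are each indexed by a single $r$-subset of $J$.

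The fix is to keep only the diagonal $R=S$, which your own assignment rule $(H,K,S)=(A\setminus B,B\setminus A,A\cap B)$ already singles out. For fixed disjoint $H,K$, take the source basis $\beta_S=dz_H\wedge d\bar z_K\wedge V_S$ and the target basis $\delta_R=dz_H\wedge d\bar z_K\wedge V_{J\setminus R}$, with $R,S\in\binom Jr$. Then $\beta_S\wedge d_IV_I=d_I\delta_R$ exactly when $I=J\setminus(R\cup S)$ and $R\cap S=\varnothing$, so the block is literally $\HH_r^J$. Writing the bases with the even forms $V_S$ and $V_{J\setminus R}$ also removes what you call the main obstacle: no reordering sign ever occurs. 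Moreover, for fixed $H,K$, $\beta_S$ differs from $dz_{H\cup S}\wedge d\bar z_{K\cup S}$ by a nonzero constant depending only on $S$, so the sign bookkeeping you planned is unnecessary.
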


\begin{proof}
Fix disjoint $H,K$ and set $J=[n]\setminus(H\cup K)$.
Use the source and target bases
\[
 \beta_S=dz_H\wedge d\bar z_K\wedge V_S,
 \qquad
 \delta_R=dz_H\wedge d\bar z_K\wedge V_{J\setminus R},
 \qquad R,S\in\binom Jr.
\]
The term $d_IV_I$ sends $\beta_S$ to $d_I\delta_R$ precisely when $I=J\setminus(R\cup S)$ and $R\cap S=\varnothing$.
Since the $V_i$ have even degree, no reordering sign occurs.
These source blocks partition a basis of $V^{p,q}$: for a coordinate form indexed by $(I_+,I_-)$, take $H=I_+\setminus I_-$, $K=I_-\setminus I_+$ and $S=I_+\cap I_-$.
The same construction partitions the target basis.
\end{proof}

\subsection{An auxiliary dually Lorentzian polynomial}

We recall only the polynomial definitions needed to apply the inequality of Ross, S\"u\ss, and Wannerer.
The notion of a Lorentzian polynomial is due to Br\"and\'en and Huh \cite[Definition~2.1]{BH}.
For degree $e\geqslant2$, these are the coefficientwise limits of homogeneous polynomials with all coefficients strictly positive such that, for every multi-index $\alpha$ with $|\alpha|=e-2$, the Hessian of the partial derivative $\partial^\alpha f$ is nonsingular and has exactly one positive eigenvalue.
In degrees zero and one, the Lorentzian polynomials are the homogeneous polynomials with nonnegative coefficients.

A polynomial is multiaffine if it has degree at most one in each variable.
For a homogeneous multiaffine polynomial $s$ in $\ell$ variables, define
\[
 s^\vee(y)=y_1\cdots y_{\ell}s(y_1^{-1},\ldots,y_{\ell}^{-1}).
\]
Ross, S\"u\ss, and Wannerer call $s$ \emph{dually Lorentzian} when $s^\vee$ is Lorentzian \cite[Definition~4.2]{RSW}; their normalization operator is the identity in this multiaffine case.

A real polynomial is stable, in the convention recalled by Br\"and\'en and Huh \cite[Section~2.1]{BH}, if it is zero or is nonvanishing when all its variables lie in the open upper half-plane.
Br\"and\'en and Huh proved that a homogeneous stable polynomial with nonnegative coefficients is Lorentzian \cite[Proposition~2.2]{BH}.
This criterion is the only Lorentzian-polynomial test required in the proof below.

\begin{lemma}\label{lem:auxiliary-polynomial}
For any positive definite Hermitian $k\times k$ matrices $B_1,\ldots,B_m$, there exist vectors $u_1,\ldots,u_{\ell}$ spanning $\C^k$ and real numbers $a_{\rho i}>0$ such that
\[
 B_i=\sum_{\rho=1}^\ell a_{\rho i}u_\rho u_\rho^*.
\]
For $U=(u_1,\ldots,u_{\ell})$, the polynomial
\[
 s(y)=\det\bigl(U\diag(y)U^*\bigr)
      =\sum_{|T|=k}|\det U_T|^2y_T
\]
is nonzero, homogeneous, multiaffine, and dually Lorentzian.
Here $U_T$ is the $k\times k$ submatrix formed by the columns indexed by $T\subset[\ell]$, in increasing order, and $y_T=\prod_{\rho\in T}y_\rho$.
\end{lemma}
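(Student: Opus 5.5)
The lemma has three parts: a decomposition of the $B_i$ into rank-one terms, an expansion of $s$, and the dually Lorentzian property of $s$, which we obtain from stability via Br\"and\'en--Huh.

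\emph{Decomposition.} Write $B_i=B_i^{1/2}B_i^{1/2}$. The columns of $B_i^{1/2}$ give $B_i=\sum_{c=1}^k v_{ic}v_{ic}^*$, and each family $(v_{ic})_c$ is a basis of $\C^k$. A common family with strictly positive coefficients for all $i$ is obtained in two steps. First, take $\varepsilon>0$ with $B_i-\varepsilon I_k>0$ for every $i$. Second, list the vectors $\sqrt{\varepsilon}\,e_1,\ldots,\sqrt{\varepsilon}\,e_k$ together with all $v_{ic}'$ from the decompositions $B_i-\varepsilon I_k=\sum_c v'_{ic}v'^*_{ic}$. Then
\[
 B_i=\sum_a \varepsilon e_ae_a^*+\sum_c v'_{ic}v'^*_{ic}+\sum_{j\ne i,c}0\cdot v'_{jc}v'^*_{jc}.
\]
The zero coefficients must be made positive. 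For this we use $\varepsilon I_k=\sum_a\varepsilon e_ae_a^*$: for each $j\ne i$, subtract a tiny multiple $\eta\, v'_{jc}v'^*_{jc}$ from the $\varepsilon I_k$ part and keep the remainder positive definite. Concretely, take $\delta>0$ small, set $a_{\rho i}=\delta$ on the foreign vectors $v'_{jc}$ ($j\ne i$), and absorb $\varepsilon I_k-\delta\sum_{j\ne i,c}v'_{jc}v'^*_{jc}$. This remainder is positive definite for small $\delta$, so it equals $\sum_a \lambda_{ia} w_{ia}w_{ia}^*$ in its own eigenbasis, and those eigenvectors must then also be included in the list. Iterating would loop. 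To avoid this, the cleaner choice is to fix the list first and solve for $a$. Take $U$ to contain $e_1,\ldots,e_k$, all the $v'_{ic}$, and, for each $i$, an extra basis attached to that $i$. The key fact is that the set of positive combinations $\sum_\rho a_\rho u_\rho u_\rho^*$ with all $a_\rho>0$, for a family $u_\rho$ whose rank-one projections span the Hermitian matrices, is an open convex cone containing $\sum_\rho u_\rho u_\rho^*$. We therefore add to the list a family $u_\rho$ whose $u_\rho u_\rho^*$ span $\mathrm{Herm}_k$, for instance $e_a$, $e_a+e_b$, $e_a+\ii e_b$. The open cone $\{\sum a_\rho u_\rho u_\rho^*: a>0\}$ is the interior of the full positive semidefinite cone generated, and it contains every positive definite matrix: a positive definite $B$ equals $\varepsilon\sum_\rho u_\rho u_\rho^*+(B-\varepsilon\sum_\rho u_\rho u_\rho^*)$, and the second term is a real combination of the spanning projections with small coefficients once $\varepsilon$ is tiny. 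Add those coefficients to $\varepsilon$. Since $B-\varepsilon\sum u_\rho u_\rho^*\to B$ is not small, the correct form is to first use $B=\sum_c v_cv_c^*$ with the $v_c$ in the list with coefficient $1$, and then perturb. Hence the list is: the $v_{ic}$ for all $i,c$, plus the spanning family. Every coefficient is then positive, and the list spans $\C^k$.

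\emph{Expansion and basic properties.} Cauchy--Binet applied to $U\diag(y)U^*=(U\diag(y))U^*$ gives $s(y)=\sum_{|T|=k}\det(U_T)\,y_T\,\overline{\det U_T}$. So $s$ is homogeneous of degree $k$ and multiaffine. It is nonzero because $U$ has rank $k$, so some $\det U_T\ne0$.

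\emph{Dually Lorentzian.} We compute
\[
 s^\vee(y)=\sum_T|\det U_T|^2y_{[\ell]\setminus T},
\]
which has nonnegative coefficients and is homogeneous. It therefore suffices, by \cite[Proposition~2.2]{BH}, to show that $s^\vee$ is stable. Stability of $s$ is standard: if every $\operatorname{Im}y_\rho>0$, then $U\diag(y)U^*=P+\ii Q$ with $Q=U\diag(\operatorname{Im}y)U^*>0$ because $U$ has full row rank. If this matrix were singular with $(P+\ii Q)x=0$, then $x^*Qx=0$, contradicting $Q>0$. The main point is that stability passes to $s^\vee$. Inversion $y_\rho\mapsto -1/y_\rho$ preserves the upper half-plane, and $s^\vee(y)=(\prod_\rho y_\rho)\, s(1/y)$. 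Multiaffine homogeneity of degree $k$ gives
\[
 s(-1/y)=(-1)^k s(1/y),
\]
so for $y$ in the upper half-plane $s^\vee(y)=\pm(\prod_\rho y_\rho)\,s(-1/y)\ne0$. Hence $s^\vee$ is stable with nonnegative coefficients, and therefore Lorentzian. This is exactly the statement that $s$ is dually Lorentzian.
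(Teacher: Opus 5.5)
\textbf{There is a genuine gap in the decomposition step.} Your Cauchy--Binet expansion and your stability argument are correct. Deducing stability of $s^\vee$ from $y_\rho\mapsto-1/y_\rho$ together with homogeneity is a valid variant of the paper's argument, which uses conjugation and real coefficients.

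The gap is in the rank-one decomposition, where the only real difficulty is making every coefficient strictly positive. Your justification rests on the claim that the open cone $\{\sum_\rho a_\rho u_\rho u_\rho^*: a_\rho>0\}$ of a finite family whose projections span the Hermitian matrices contains every positive definite matrix. This is false for $k\geqslant2$: that cone is polyhedral, and the positive definite cone is not. Your final list, the columns $v_{ic}$ of $B_i^{1/2}$ together with a spanning family, does not fix this. The recipe ``coefficient $1$ on the $v_{ic}$, then perturb'' works only if $B_i$ lies in the interior of the cone generated by the whole list, and you never check this. It fails already for $k=2$, $m=1$, $B_1=I_2$ with your suggested family $e_1,e_2,e_1+e_2,e_1+\ii e_2$. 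Here the $v_{1c}$ are $e_1,e_2$. If $a_5,a_6$ denote the coefficients of $e_1+e_2$ and $e_1+\ii e_2$, the $(1,2)$ entry of $\sum_\rho a_\rho u_\rho u_\rho^*$ is $a_5-\ii a_6$. Representing $I_2$ therefore forces $a_5=a_6=0$.

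\textbf{The missing idea} is a generating family whose strictly positive cone contains $I_k$ itself, not merely one whose projections span. The paper uses $e_a$, $e_a\pm e_b$, $e_a\pm\ii e_b$. Their projections span and sum to a positive multiple of $I_k$. Congruence by $B_i^{1/2}$ then puts $B_i$ in the interior of the cone of the transported family. Let $G$ be the sum of all generators over all $i$. Then $B_i-\varepsilon G$ stays in that cone for small $\varepsilon>0$, and adding back $\varepsilon G$ gives strictly positive coefficients.

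Your first attempt can also be completed without the loop you feared. Take such a family $F$, write $B_i-\varepsilon I_k=\sum_c w_{ic}w_{ic}^*$, and use the list $F\cup\{w_{jc}\}$. For small $\delta>0$, the matrix $\varepsilon I_k-\delta\sum_{j\ne i,c}w_{jc}w_{jc}^*$ still lies in the open cone of strictly positive combinations of $F$, because that cone is open and contains $\varepsilon I_k$. So no new vectors ever need to be added.
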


\begin{proof}
Let $e_1,\ldots,e_k$ be the standard basis of $\C^k$.
The rank-one matrices formed from $e_a$, $e_a\pm e_b$ and $e_a\pm\ii e_b$ span the real space of Hermitian matrices, and their sum is a positive scalar multiple of the identity.
Their cone therefore contains the identity in its interior.
Congruence by $B_i^{1/2}$ gives a finitely generated rank-one cone containing $B_i$ in its interior.
Take the union of these generators, and let $G$ be their sum.
Each $B_i-\varepsilon_iG$ remains in the resulting cone for sufficiently small $\varepsilon_i>0$; adding back $\varepsilon_iG$ gives the stated refinement with all coefficients positive.

The classical Cauchy--Binet formula gives the expansion; see Horn and Johnson \cite[Section~0.8.7]{HornJohnson}.
The vectors $u_\rho$ span $\C^k$, so $s$ is nonzero.
If all $y_\rho$ lie in the upper half-plane, the matrix inside the determinant has positive definite imaginary part; hence $s$ is real stable.
The multiaffine dual is also real stable: inversion sends the upper half-plane to the lower half-plane, where $s$ is nonvanishing by conjugation.
Both polynomials have nonnegative coefficients.
By Br\"and\'en and Huh \cite[Proposition~2.2]{BH}, $s^\vee$ is Lorentzian, so $s$ is dually Lorentzian.

\end{proof}

\subsection{The generalized Alexandrov--Fenchel inequality and the first blocks}

\begin{proposition}[Ross--S\"u\ss--Wannerer {\cite[Theorem~8.2]{RSW}}]\label{prop:AF}
Let $s\ne0$ be a multiaffine dually Lorentzian polynomial of degree $m-2$ in $\ell$ variables, and let $A_1,\ldots,A_{\ell},Q$ be positive definite real symmetric $m\times m$ matrices.
Define the symmetric bilinear form on real symmetric matrices $P_1,P_2$ by
\[
 \mathcal B(P_1,P_2)=D_m(P_1,P_2,s(A_1,\ldots,A_{\ell})),
\]
where the notation is extended linearly over the monomials of $s$.
Then, for every real symmetric $P$,
\[
 \mathcal B(P,Q)^2\geqslant
 \mathcal B(P,P)\mathcal B(Q,Q),
\]
with equality if and only if $P$ is proportional to $Q$.
\end{proposition}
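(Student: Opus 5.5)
The plan is to rewrite $\mathcal B$ as the Hessian of a polynomial obtained from a determinant by applying a differential operator, and then to read off its signature from Lorentzian theory. For $T\subset[\ell]$ with $|T|=m-2$, write $A_T$ for the list of matrices $A_i$, $i\in T$. By the normalization of $D_m$,
\[
 D_m(P_1,P_2,A_T)=\frac1{m!}\,\partial_{w_1}\partial_{w_2}\partial_{x_T}
 \det\Bigl(\sum_i x_iA_i+w_1P_1+w_2P_2\Bigr).
\]
Hence $m!\,\mathcal B(P_1,P_2)$ is the mixed $w$-derivative of $s(\partial_x)$ applied to the determinant. Because $s$ is multiaffine, only the multilinear part of the determinant in $x$ contributes.

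I will fix a basis of the real space of symmetric $m\times m$ matrices consisting of rank-one matrices $v_jv_j^{\mathsf T}$ whose positive cone contains $Q$ in its interior, as in the proof of Lemma~\ref{lem:auxiliary-polynomial}. Consider
\[
 f(x,w)=\det\Bigl(\sum_i x_iA_i+\sum_j w_jv_jv_j^{\mathsf T}\Bigr).
\]
This polynomial is real stable, homogeneous, and has nonnegative coefficients, since all the matrices are positive semidefinite. By \cite[Proposition~2.2]{BH} it is therefore Lorentzian.

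The key step is to show that $g=s(\partial_x)f$, restricted to the $w$-variables, is again Lorentzian. This is where dual Lorentzianity of $s$ enters: in the multiaffine setting, the operator $s(\partial_x)$ is, up to reversing exponents, the symbol of $s^\vee$. I would invoke (or reprove along the lines of Brändén--Huh's symbol theorem) the statement that a differential operator with Lorentzian dual symbol preserves Lorentzian polynomials. The resulting $g$ is a quadratic form in $w$ equal to $m!\,\mathcal B$ evaluated on $\sum_j w_jv_jv_j^{\mathsf T}$. Its coefficients are strictly positive, since each $A_i$ is positive definite and $s\neq0$. A Lorentzian quadratic form has at most one positive eigenvalue. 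Because $\mathcal B(Q,Q)>0$, the form $\mathcal B$ on symmetric matrices has exactly one positive direction. The inequality $\mathcal B(P,Q)^2\geqslant\mathcal B(P,P)\mathcal B(Q,Q)$ then follows for every real symmetric $P$: restrict $\mathcal B$ to $\mathrm{span}(P,Q)$, where it cannot be positive definite.

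The main obstacle is the equality case, which amounts to nondegeneracy of $\mathcal B$. Lorentzian signature alone allows a singular Hessian, and equality forces $P-cQ$ to lie in the $\mathcal B$-orthogonal complement of $Q$ with $\mathcal B(P-cQ,P-cQ)=0$. I would first try a connectedness argument. When $s$ is a single monomial $y_T$, $\mathcal B$ is the classical mixed-discriminant form, which is nondegenerate by Alexandrov's equality case. For a general multiaffine $s$, the equality $\mathcal B(R,R)=0$ with $\mathcal B(R,Q)=0$ forces $D_m(R,R,A_T)=0$ and $D_m(R,Q,A_T)=0$ for every monomial $T$ in the support of $s$. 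This is because each monomial form is itself Lorentzian with $Q$ positive, so each term satisfies $D_m(R,R,A_T)\leqslant0$, and the coefficients of $s$ are positive. Alexandrov's strict inequality for any single $T$ then gives $R=0$, that is, $P$ proportional to $Q$. If the termwise sign claim fails for off-support reasons, the fallback is RSW's approach: approximate $s$ by strictly dually Lorentzian polynomials and use the open nondegeneracy condition together with a perturbation of the $A_i$.
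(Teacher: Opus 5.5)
Your proof of the inequality itself is sound. The step you leave vague does hold. Because $\det(\sum_ix_iA_i+W)$ is not multiaffine in $x$, use a degree bound $K\mathbf 1$ in $x$. For multiaffine $s$ of degree $m-2$, the Br\"and\'en--Huh symbol of $s(\partial_x)$ is then $K^{m-2}(x+y)^{(K-1)\mathbf 1}s^\vee(x+y)$ times a power of $w+y_w$. This is Lorentzian by the product theorem, so the symbol theorem applies. Hence $\mathcal B$ has at most one positive eigenvalue, and reverse Cauchy--Schwarz follows.

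There are two harmless slips. First, $g(0,w)=\tfrac{m!}{2}\mathcal B(W,W)$ with $W=\sum_jw_jv_jv_j^{\mathsf T}$, not $m!\,\mathcal B$. Second, its square coefficients vanish, since $D_m(vv^{\mathsf T},vv^{\mathsf T},\ldots)=0$; only nonnegativity is needed.

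The paper gives no proof of its own here; it quotes Ross--S\"u\ss--Wannerer. The equality clause is exactly what it needs in Proposition~\ref{lem:strict-deletion-new}.

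The equality case has a genuine gap. You reduce to $R=P-cQ\neq0$ with $\mathcal B(R,Q)=\mathcal B(R,R)=0$ and then claim $D_m(R,R,A_T)\leqslant0$ for every $T$. Alexandrov's inequality for $\mathcal B_T=D_m(\cdot,\cdot,A_T)$ gives this only when $\mathcal B_T(R,Q)=0$. You know only that $\sum_Tc_T\mathcal B_T(R,Q)=0$. In general Alexandrov yields just $\mathcal B_T(R,R)\leqslant\mathcal B_T(R,Q)^2/\mathcal B_T(Q,Q)$, which can be positive.

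The principle behind your step is false: a positive combination of nondegenerate signature-$(1,\ast)$ forms, all positive at a common $Q$, need not be nondegenerate. On $\R^2$, take $q_1=x^2-y^2$ and $q_2=y^2-x^2/4$. Both are nondegenerate of signature $(1,1)$ and positive at $Q=(3,2)$. Yet $q_1+q_2=\tfrac34x^2$ is degenerate. The vector $R=(0,1)$ satisfies $(q_1+q_2)(R,Q)=(q_1+q_2)(R,R)=0$, while $q_2(R,R)=1>0$.

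Your fallback does not close the gap. Nondegeneracy is an open condition, so it passes from a nondegenerate form to its small perturbations, not from nondegenerate approximants to their limit. Lorentzian quadratics with singular Hessian, such as the paper's $2x_1x_2+2x_3x_4+(x_1+x_2)(x_3+x_4)$, are exactly such limits.

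What is missing is a proof that the radical of $\mathcal B$ is trivial. That proof must genuinely use both the mixed-discriminant structure and the dually Lorentzian hypothesis on $s$. This is the content of the equality statement in Ross--S\"u\ss--Wannerer's Theorem~8.2, and the paper's remark after Proposition~\ref{lem:strict-deletion-new} shows that the Lorentzian property alone cannot supply it.
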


This generalized Alexandrov--Fenchel inequality extends Alexandrov's mixed-discriminant inequality \cite{Alexandrov}.
We now derive the matrix statement needed for the SD condition.

\begin{proposition}
\label{lem:strict-deletion-new}
For every $J\subset[n]$ with $|J|=k+2$, the matrix $\HH_1^J$ has signature $(1,k+1)$.
\end{proposition}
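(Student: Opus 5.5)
The plan is to realize $\HH_1^J$ as the Gram matrix of the bilinear form $\mathcal B$ of Proposition~\ref{prop:AF}, restricted to diagonal matrices. The strict reverse Cauchy--Schwarz inequality will then force a Lorentzian signature.

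Write $J=\{j_1,\ldots,j_{k+2}\}$. Then $\HH_1^J$ is the $(k+2)\times(k+2)$ matrix with zero diagonal and off-diagonal entries $d_{J\setminus\{i,j\}}$. It is therefore the Gram matrix of the quadratic form
\[
q(x)=\sum_{i\neq j\in J}d_{J\setminus\{i,j\}}\,x_ix_j .
\]
\textbf{Step 1 (auxiliary polynomial).} Apply Lemma~\ref{lem:auxiliary-polynomial} to the matrices $B_j$, $j\in J$. This gives
\[
B_j=\sum_{\rho=1}^{\ell}a_{\rho j}u_\rho u_\rho^*\quad(a_{\rho j}>0)
\]
and the dually Lorentzian multiaffine polynomial
\[
s(y)=\sum_{|T|=k}|\det U_T|^2y_T ,
\]
which has degree $k=m-2$ with $m=k+2$.

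\textbf{Step 2 (test matrices).} Take $m=k+2$ and put $A_\rho=\diag(a_{\rho j})_{j\in J}$; these are positive definite real diagonal matrices. Take $Q=I_{k+2}$.

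\textbf{Step 3 (identify the form).} For $P=\diag(x)$, compute $\mathcal B(P,P)$. For diagonal matrices the mixed discriminant is the normalized permanent of the matrix of diagonal entries:
\[
D_m(\diag c^{(1)},\ldots,\diag c^{(m)})=\frac1{m!}\sum_{\sigma}\prod_l c^{(l)}_{\sigma(l)} .
\]
Expanding over the monomials $y_T$ of $s$, $\mathcal B(\diag x,\diag x)$ becomes a positive constant times
\[
\sum_{i\ne j}x_ix_j\sum_{|T|=k}|\det U_T|^2\,\mathrm{per}\bigl(a_{\rho,j'}\bigr)_{\rho\in T,\ j'\in J\setminus\{i,j\}} .
\]
Cauchy--Binet applied to $\det\bigl(\sum_l t_lB_{i_l}\bigr)=\det\bigl(U\diag(\sum_l t_la_{\rho i_l})U^*\bigr)$ identifies the coefficient of $t_1\cdots t_k$. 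It shows that the inner sum equals $k!\,D_k(B_{j'}:j'\in J\setminus\{i,j\})=d_{J\setminus\{i,j\}}$. The diagonal terms $x_i^2$ do not appear, since each index can be used only once in the permanent. Hence $\mathcal B(\diag x,\diag x)=c\,q(x)$ with $c>0$, and similarly $\mathcal B(\diag x,I)=c'\sum_i x_i\,\partial\text{-terms}$ is linear in $x$.

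\textbf{Step 4 (signature).} First, $\mathcal B(I,I)=c\,q(\mathbf 1)=c\sum_{i\ne j}d_{J\setminus\{i,j\}}>0$. Now take a diagonal $P\ne0$ that is $\mathcal B$-orthogonal to $I$. Since $P$ is not proportional to $I$, Proposition~\ref{prop:AF} gives
\[
0=\mathcal B(P,I)^2>\mathcal B(P,P)\,\mathcal B(I,I),
\]
so $\mathcal B(P,P)<0$. The diagonal matrices form a space of dimension $k+2$. On it, $q$ is positive on $\mathbf 1$ and negative definite on the $(k+1)$-dimensional $\mathcal B$-orthogonal complement of $\mathbf 1$. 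Hence $q$ is nondegenerate with signature $(1,k+1)$, and so is $\HH_1^J$.

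The main obstacle I expect is Step 3: getting the normalizations of $D_m$ for diagonal matrices and of $s(A_1,\ldots,A_\ell)$ right, so that each $d_{J\setminus\{i,j\}}$ appears with the same positive constant and the diagonal entries vanish. Only positivity and uniformity of this constant matter, not its exact value. Note also that the strict equality clause of Proposition~\ref{prop:AF} is exactly what rules out a null direction, so it is indispensable in Step 4.
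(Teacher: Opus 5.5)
Your proposal is correct and takes essentially the same route as the paper: the same auxiliary polynomial, the diagonal matrices $A_\rho=\diag(a_{\rho j})_{j\in J}$ with $Q=I_m$, the identification $m!\,\mathcal B(\diag x,\diag y)=x^{\mathsf T}\HH_1^J y$ (so your constant is exactly $c=1/m!$), and the strict equality case of Proposition~\ref{prop:AF}. As in the paper, that equality case makes $\mathcal B$ negative definite on the $(k+1)$-dimensional $\mathcal B$-orthogonal complement of $I_m$, which gives signature $(1,k+1)$.
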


\begin{proof}
Relabel $J=[m]$, where $m=k+2$, and use the representation and the dually Lorentzian polynomial $s$ from Lemma~\ref{lem:auxiliary-polynomial}.
Put
\[
 A_\rho=\diag(a_{\rho1},\ldots,a_{\rho m}).
\]
The matrix $A_\rho$ is positive definite.
Apply Proposition~\ref{prop:AF} to $s$ and the matrices $A_\rho$.
Writing $X(x)=\diag(x_1,\ldots,x_m)$, multilinearity gives
\[
 m!\mathcal B(X(x),X(y))=x^{\mathsf T}\HH_1^Jy.
\]
Indeed, the rank-one representation gives
\[
 \det\Bigl(\sum_i x_iB_i\Bigr)
 =s\Bigl(\sum_i a_{1i}x_i,\ldots,\sum_i a_{\ell i}x_i\Bigr).
\]
For $I\subset[m]$ with $|I|=k$, the coefficient of the squarefree monomial $x_I$ is $d_I$.
The two remaining diagonal positions give the corresponding off-diagonal entry of $\HH_1^J$.
Since all $d_I>0$, we have $\mathcal B(I_m,I_m)>0$.
The equality statement makes $\mathcal B$ strictly negative on the hyperplane $\mathcal B(X,I_m)=0$ in the diagonal subspace: a nonzero $X$ on this hyperplane cannot be proportional to $I_m$, so equality in Proposition~\ref{prop:AF} is excluded.
This hyperplane has dimension $m-1$, proving the signature $(1,m-1)$.
\end{proof}

The strictness in this argument is essential.
A Lorentzian quadratic can have positive off-diagonal coefficients and a singular Hessian: for example, $2x_1x_2+2x_3x_4+(x_1+x_2)(x_3+x_4)$ has Hessian eigenvalues $4,0,-2,-2$.
The corresponding form is a strictly strongly positive member of the family in Berndtsson--Sibony \cite[Section~9]{BS}, but
\[
 (V_1+V_2-V_3-V_4)\wedge
 \left(V_1V_2+V_3V_4+\tfrac12(V_1+\cdots+V_4)^2\right)=0.
\]
It therefore fails both the hard Lefschetz theorem and the Hodge--Riemann relations in bidegree $(1,1)$.
Nondegeneracy here follows from positive definite determinantal data and the equality case in Proposition~\ref{prop:AF}.

\subsection{The Lefschetz isomorphism and proof of Theorem~\ref{thm:sd}}

\begin{proof}[Proof of Theorem~\ref{thm:sd}]
By Proposition~\ref{prop:sd-blocks-new}, the only blocks of $L_\Omega^{1,n-k-1}$ are $\HH_0^J$ and $\HH_1^J$.
The former are the positive scalars $d_J$; the latter are nonsingular by Proposition~\ref{lem:strict-deletion-new}.
Hence $L_\Omega^{1,n-k-1}$ is an isomorphism for every positive SD matrix.
Applying this conclusion along the path in Lemma~\ref{lem:sd-reduction} proves the Hodge--Riemann assertion for any positive reference form.
\end{proof}

When $k=2$, this recovers Chen's affirmative results for $n=4,5$.
In dimension four, if $d_{ij}=d_{\{i,j\}}$ is the coefficient of $V_iV_j$, set
\[
 \ell_1=\sqrt{d_{12}d_{34}},\quad
 \ell_2=\sqrt{d_{13}d_{24}},\quad
 \ell_3=\sqrt{d_{14}d_{23}}.
\]
Then
\[
 \det\HH_1^{[4]}
 =-(\ell_1+\ell_2+\ell_3)(\ell_1+\ell_2-\ell_3)
   (\ell_1-\ell_2+\ell_3)(-\ell_1+\ell_2+\ell_3).
\]
Chen proved the required strict triangle inequalities using Ptolemy's inequality \cite[proof of Theorem~4.6]{Chen}.
The preceding application of Ross--S\"u\ss--Wannerer's inequality provides the corresponding signature statement in every rank.

\section{SD counterexamples in bidegree \texorpdfstring{$(2,n-k-2)$}{(2,n-k-2)}}\label{sec:sdnegative}

\subsection{The remaining blocks in bidegree \texorpdfstring{$(2,n-k-2)$}{(2,n-k-2)}}

Theorem~\ref{thm:sd} controls the blocks with $r=0,1$ in all dimensions.
The next bidegree introduces the matrices $\HH_2^J$.

\begin{proposition}\label{prop:second-blocks}
Let $n\geqslant6$, $2\leqslant k\leqslant n-4$, and let $M=\sum_jB_jV_j$, where every $B_j$ is positive definite.
For $\Omega=\det M$,
\[
 L_\Omega^{2,n-k-2}\text{ is an isomorphism}
 \quad\Longleftrightarrow\quad
 \det\HH_2^J\ne0\quad\text{for every }J\subset[n],\ |J|=k+4.
\]
\end{proposition}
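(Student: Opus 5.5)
By Proposition~\ref{prop:sd-blocks-new} with $(p,q)=(2,n-k-2)$, the map $L_\Omega^{2,n-k-2}$ is a direct sum of the blocks $\HH_r^J$ with $r\in\{0,1,2\}$. The map is invertible exactly when every block that occurs is nonsingular. The plan is therefore to show that the blocks with $r=0,1$ are automatically nonsingular, and that each $\HH_2^J$ with $|J|=k+4$ actually occurs.

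First, the blocks with $r=0$ are the $1\times1$ matrices $(d_J)$ with $|J|=k$. They are nonsingular because $d_J=k!D_k(B_i:i\in J)>0$, by Zheng's lower bound recorded in the preliminaries. Second, the blocks with $r=1$ are the matrices $\HH_1^J$ with $|J|=k+2$. By Proposition~\ref{lem:strict-deletion-new} each has signature $(1,k+1)$, so each is nonsingular. This step uses only that every $B_j$ is positive definite, which is our standing hypothesis.

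Next, for $r=2$, a block $\HH_2^J$ with $|J|=k+4$ appears once for each ordered partition $[n]\setminus J=H\sqcup K$ with $|H|=p-2=0$ and $|K|=q-2=n-k-4$. Such a partition exists and is unique: $H=\varnothing$ and $K=[n]\setminus J$, which has the right size since $|[n]\setminus J|=n-k-4\geqslant0$. Hence every $(k+4)$-subset $J\subset[n]$ contributes exactly one copy of $\HH_2^J$. The hypotheses $n\geqslant6$ and $k\leqslant n-4$ ensure that such $J$ exist, and that $\min(p,q)=2$ whenever $k\leqslant n-4$, so that $r=2$ is an allowed value.

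Combining the three steps, $L_\Omega^{2,n-k-2}$ is invertible if and only if every $\HH_2^J$ is nonsingular, since all other blocks are already nonsingular. There is no real obstacle here. The only point needing care is the bookkeeping in the block count: the condition ranges over all $(k+4)$-subsets $J$, and the nonsingularity of the $r\leqslant1$ blocks genuinely uses Proposition~\ref{lem:strict-deletion-new}.
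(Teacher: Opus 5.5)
Your proposal is correct and follows the paper's own proof essentially verbatim. You apply the block decomposition of Proposition~\ref{prop:sd-blocks-new}, dispose of the $r=0$ and $r=1$ blocks via $d_J>0$ and Proposition~\ref{lem:strict-deletion-new}, and note that every $\HH_2^J$ with $|J|=k+4$ occurs, via $H=\varnothing$ and $K=[n]\setminus J$.
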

\begin{proof}
Proposition~\ref{prop:sd-blocks-new} gives only the blocks with $r=0,1,2$.
The first two are nonsingular by positivity of $d_J$ and Proposition~\ref{lem:strict-deletion-new}.
Every block $\HH_2^J$ with $|J|=k+4$ occurs, giving the equivalence.
\end{proof}

This is a criterion for the Lefschetz isomorphism; the Hodge--Riemann property additionally requires the appropriate component in the cone definition.
For Theorem~\ref{thm:sdnegative} it suffices to make one of these blocks singular.
We do this first in dimension six and then increase the rank and dimension.

\subsection{The Ross--Toma seed and a rank-two determinant realization}
The degeneracy underlying the seed case first appeared in Ross--Toma \cite[Example~9.2, p.~37 of arXiv:1905.13636v3]{RT}.
Their example uses the second Chern class of an ample real twist of a rank-three bundle on $(\mathbf P^2)^3$.
Here we present a positive rank-two SD determinant realization of the same form, up to positive scaling; the comparison is made explicitly below.
On $\C^6$, put $W_j=V_{2j-1}+V_{2j}$ for $j=1,2,3$, and set
\[
 S_1=\begin{pmatrix}1&0\\0&-1\end{pmatrix},\qquad
 S_2=\frac12\begin{pmatrix}-1&\sqrt3\\\sqrt3&1\end{pmatrix},\qquad
 S_3=\frac12\begin{pmatrix}-1&-\sqrt3\\-\sqrt3&1\end{pmatrix}.
\]
Each $S_j$ has eigenvalues $1,-1$.
Hence
\begin{equation}\label{eq:newseed}
 \widehat M_t=\sum_{j=1}^3(I_2+\sqrt t\,S_j)W_j,
 \qquad 0\leqslant t\leqslant\tfrac12,
\end{equation}
is Griffiths positive and satisfies SD.
Write
\[
 \Sigma=\sum_{j=1}^3W_j^2,\qquad
 \Pi=\sum_{i<j}W_iW_j.
\]
Expansion of the $2\times2$ determinant gives
\begin{equation}\label{eq:newomega}
 \Omega_t:=\det \widehat M_t=(1-t)\Sigma+(2+t)\Pi.
\end{equation}
Since $W_j^3=0$, the products satisfy
\[
 \Sigma^2=2\sum_{i<j}W_i^2W_j^2,\quad
 \Sigma\Pi=W_1^2W_2W_3+W_1W_2^2W_3+W_1W_2W_3^2,
 \quad \Pi^2=\tfrac12\Sigma^2+2\Sigma\Pi.
\]
For $u=(2+t)/(2(1-t))$, it follows that
\begin{equation}\label{eq:explicitSDkernel}
 \Omega_t(\Pi-u\Sigma)=(1-t)(1+4u-2u^2)\Sigma\Pi.
\end{equation}
At
\[
 t_* =\sqrt6-2\in(0,\tfrac12),\qquad u_*=1+\frac{\sqrt6}{2},
\]
the right side is zero.
Since $\Pi-u_*\Sigma\ne0$, this is an explicit kernel vector for $L_{\Omega_{t_*}}^{2,2}$.

To identify the source of this seed, put $X=(\mathbf P^2)^3$ and let $h_j$ be the pullback of the hyperplane class from the $j$th factor.
Ross--Toma take $E=\bigoplus_{j=1}^3\pi_j^*\mathcal O_{\mathbf P^2}(1)$, where $\pi_j$ is the projection to that factor, and its real twist $E\langle\tau h\rangle$ by $h=h_1+h_2+h_3$.
Their formula is
\[
 c_2(E\langle\tau h\rangle)
 =\sum_{i<j}h_ih_j+(2\tau+3\tau^2)(h_1+h_2+h_3)^2.
\]
The relations $h_j^3=0$ agree with $W_j^3=0$.
Under the identification $h_j\leftrightarrow W_j$, for $t>0$,
\[
 \Omega_t=3t\bigl[\Pi+c(\Sigma+2\Pi)\bigr],
 \qquad c=\frac{1-t}{3t}.
\]
At $t=t_*$, one has $c=1/\sqrt6$.
Taking the positive solution of $2\tau+3\tau^2=1/\sqrt6$ gives precisely Ross--Toma's degeneration, with the kernel displayed above.
Thus the seed form is due to Ross--Toma; the rank-two determinant realization and the extension below are the constructions used here.

For use in the extension, let $\mathsf K_0(t)$ be the block on the basis $(V_S)_{S\in\binom{[6]}2}$, with target basis $(V_{[6]\setminus R})_{R\in\binom{[6]}2}$.
The coefficients of $\Omega_t$ are
\begin{equation}\label{eq:symmetricq}
 d_{ij}(t)=\begin{cases}2(1-t),&i,j\text{ belong to the same pair},\\
 2+t,&i,j\text{ belong to different pairs}.
 \end{cases}
\end{equation}
Here $d_{\{i,j\}}(t)=d_{ij}(t)$ for $i<j$, consistently with the coefficient notation $d_I$ in Section~\ref{sec:preliminaries}.
Thus $(\mathsf K_0)_{R,S}=d_{[6]\setminus(R\cup S)}$ for disjoint pairs, and is zero otherwise.
Exact expansion gives the short factorization
\begin{equation}\label{eq:simpledet}
 \det \mathsf K_0(t)=2^{15}3^6(1-t)^3(2t+1)^3
 (t^2+t+1)^2(t^2+4t-2).
\end{equation}
In particular, its endpoint signs on $[0,\tfrac12]$ are opposite, and $t_*$ is its unique singular parameter on this interval.

\subsection{Increasing the rank}
We first add scalar diagonal entries to the rank-two example.
The main point is to preserve the opposite signs of the two endpoint determinants.
A small perturbation will then give positive definite coefficient matrices in the new coordinate directions.

For $m\geqslant0$, put $N=m+6$ and $\omega_{0,N}=\sum_{j=1}^NV_j$, the standard form on $\C^N$.
The middle block $\mathsf K_m(t)$ of $\Omega_t\omega_{0,N}^m/m!$, with source basis $(V_S)$ and target basis $(V_{[N]\setminus R})$ for $R,S\in\binom{[N]}2$, has entries
\[
 (\mathsf K_m)_{R,S}=\begin{cases}
 \displaystyle\sum_{\substack{1\leqslant a<b\leqslant6\\
 \{a,b\}\cap(R\cup S)=\varnothing}}d_{ab}(t),&R\cap S=\varnothing,\\
 0,&R\cap S\ne\varnothing,
 \end{cases}
 \qquad R,S\in\binom{[N]}2.
\]
Multiplication by $\omega_{0,N}$ from the span of the $V_i$ to the span of the $V_R$, $|R|=2$, has matrix $L_m$ with $(L_m)_{R,i}=1$ if $i\in R$ and $0$ otherwise.
The identity
\[
 \frac{\Omega_t\omega_{0,N+1}^{m+1}}{(m+1)!}
 =\frac{\Omega_t\omega_{0,N}^m}{m!}\wedge
 \left(V_{N+1}+\frac{\omega_{0,N}}{m+1}\right)
\]
explains the block decomposition below: list the old pairs first and the pairs containing $N+1$ last.
\begin{equation}\label{eq:suspension}
 \mathsf K_{m+1}=\begin{pmatrix}
 \mathsf K_m&\mathsf K_mL_m/(m+1)\\
 L_m^{\mathsf T}\mathsf K_m/(m+1)&0
 \end{pmatrix},\qquad
 L_m^{\mathsf T}\mathsf K_mL_m=(m+1)(m+2)R_m,
\end{equation}
where $(R_m)_{ii}=0$ and
\[
 (R_m)_{ij}=\sum_{\substack{1\leqslant a<b\leqslant6\\a,b\notin\{i,j\}}}
 d_{ab}(t)\quad(i\ne j).
\]
Both identities follow by counting each surviving coefficient: there are $m+1$ choices in the first and $(m+1)(m+2)$ in the second.

We check that $R_m$ has signature $(1,N-1)$ for the whole interval.
Its off-diagonal entries are $12$ within an original pair, $12+3t$ between original pairs, $4(5+t)$ between an original and a new index, and $6(5+t)$ between two new indices.
Differences within the three original pairs give eigenvalue $-12$ with multiplicity three.
Vectors constant on each pair with the three values summing to zero give $-6(2+t)$ with multiplicity two.
New-coordinate differences give $-6(5+t)$ with multiplicity $m-1$ when $m\geqslant1$.
For $m\geqslant1$, the remaining invariant subspace consists of vectors constant on the six original indices and constant on the $m$ new indices.
In the basis of these two indicator vectors its matrix is
\[
 2(5+t)\begin{pmatrix}6&2m\\12&3(m-1)\end{pmatrix},
\]
whose determinant is $-24(m+3)(5+t)^2<0$.
Although this basis is not orthonormal, the matrix represents the restriction of the real symmetric matrix $R_m$ and has real eigenvalues.
It therefore has one eigenvalue of each sign.
For $m=0$, the remaining space is one-dimensional with eigenvalue $12(5+t)>0$.
This proves the signature assertion.

To propagate the endpoint signs, we eliminate the off-diagonal blocks in~\eqref{eq:suspension} by the explicit change of variables
\[
 \begin{pmatrix}I&0\\-L_m^{\mathsf T}/(m+1)&I\end{pmatrix}
 \mathsf K_{m+1}
 \begin{pmatrix}I&-L_m/(m+1)\\0&I\end{pmatrix}
 =\begin{pmatrix}\mathsf K_m&0\\0&-\dfrac{m+2}{m+1}R_m\end{pmatrix}.
\]
The identity blocks have sizes $\binom N2$ and $N$, respectively.
Both change-of-variable matrices have determinant one, so
\[
 \det \mathsf K_{m+1}(t)=\det \mathsf K_m(t)\,
 \det\left(-\frac{m+2}{m+1}R_m(t)\right).
\]
Since $R_m$ has one positive and $N-1$ negative eigenvalues, the second determinant is negative.
Thus each increase of $m$ reverses both endpoint signs.
Starting from~\eqref{eq:simpledet}, induction gives
\[
 \operatorname{sgn}\det \mathsf K_m(0)=(-1)^{m+1},
 \qquad
 \operatorname{sgn}\det \mathsf K_m(\tfrac12)=(-1)^m.
\]

For $k=m+2\geqslant3$, introduce strict positivity in the new directions:
\[
 \mathcal M_{t,\varepsilon}=
 \begin{pmatrix}
 \widehat M_t+\varepsilon\omega_{\mathrm{new}}I_2&0\\0&\omega_{0,N}I_m
 \end{pmatrix},\qquad
 \omega_{\mathrm{new}}=\sum_{j=7}^NV_j,\quad\varepsilon>0.
\]
Its SD coefficient matrices are positive definite.
Its middle block converges to $m!\mathsf K_m(t)$ as $\varepsilon\to0$.
For sufficiently small positive $\varepsilon$, the endpoint determinants retain opposite signs.
Fix one such $\varepsilon$;
the determinant of the middle block is continuous in $t\in[0,\tfrac12]$, so an intermediate parameter gives a singular map in dimension $N=k+4$.
Applying Lemma~\ref{lem:dimension} gives $L_{\det M}^{2,n-k-2}$ singular for every $n\geqslant6$ and $2\leqslant k\leqslant n-4$.
This proves Theorem~\ref{thm:sdnegative}.

\section{Open questions}\label{sec:scope}
The results of this paper concern pointwise forms.
For Hodge--Riemann results on top Chern classes of ample bundles on smooth projective varieties, see Lu--Zheng \cite[Theorem~5.1]{LZ} and Lin \cite[Theorem~1.5]{Lin}.
The examples here do not construct global ample bundles with prescribed curvature.

Let $M$ be a Hermitian $k\times k$ matrix of constant $(1,1)$-forms on $\C^n$, where $1\leqslant k\leqslant n$.
A real $(k,k)$-form is strictly weakly positive if its restriction to every complex $k$-plane is a positive volume form.
Finski's positivity theorem, together with the local realization of curvature tensors \cite[Theorem~1.1 and Proposition~2.10]{Finski}, gives
\[
\begin{gathered}
 M\text{ Nakano positive or dual Nakano positive}\\
 \Longrightarrow\quad\det M\text{ strictly weakly positive}.
\end{gathered}
\]
When $n-k=0$ or $1$,
\[
\begin{gathered}
 \det M\text{ belongs to the Hodge--Riemann cone}\\
 \Longleftrightarrow\quad\det M\text{ is strictly weakly positive}.
\end{gathered}
\]
Indeed, the relevant condition is positivity of a scalar when $k=n$ and of a Hermitian form on one-forms when $k=n-1$; the positive cone in each case is convex.
Thus Dinh--Nguy\^en's question has an affirmative answer in these boundary cases under either Nakano positivity or dual Nakano positivity.

For general Griffiths positive matrices, strict weak positivity of the determinant is known for $k\leqslant3$: the rank-one case is immediate, Griffiths proved the rank-two case \cite{Griffiths}, and Wan proved the rank-three case \cite[Theorem~0.3]{Wan}.

\begin{Question}\label{qu:griffiths-determinant}
Let $n\geqslant k\geqslant4$ and let $M$ be a Griffiths positive Hermitian $k\times k$ matrix of constant $(1,1)$-forms on $\C^n$.
Is $\det M$ strictly weakly positive?
\end{Question}

Restriction to complex $k$-planes reduces Question~\ref{qu:griffiths-determinant} to $n=k$, where $\det M$ is a top-degree form.
By local realization of curvature tensors, this is the top-Chern case of Griffiths' positivity question \cite[p.~247]{Griffiths}, equivalently Finski's double mixed discriminant problem in dimension $k$ \cite[Propositions~2.10--2.11 and Theorem~3.4]{Finski}.
It determines the remaining cases of Dinh--Nguy\^en's question:
\[
 n=k\geqslant4\qquad\text{and}\qquad n=k+1\geqslant5.
\]
Any counterexample in these cases must be neither Nakano positive nor dual Nakano positive.
Theorem~\ref{thm:main} concerns $n-k\geqslant2$: it shows that strict weak positivity of the determinant does not suffice for the Hodge--Riemann property.

The examples in Theorem~\ref{thm:main} and their transposes each satisfy only one of the two Nakano positivity conditions.
This leaves the following question.

\begin{Question}\label{qu:simultaneous-nakano}
Does there exist a Hermitian $2\times2$ matrix $M$ of constant $(1,1)$-forms on $\C^4$ that is both Nakano positive and dual Nakano positive and for which $L_{\det M}^{1,1}$ is singular?
\end{Question}

By Theorem~\ref{thm:sd}, any such matrix must fail the SD condition.

\appendix
\section{Python verification programs}\label{sec:verification}
Two self-contained Python verification files are available from the author's \href{https://zhangchi-chen.github.io/}{homepage} via the links below.
The programs require Python~3 and SymPy and use exact arithmetic:
\begin{enumerate}
\item \href{https://zhangchi-chen.github.io/downloads/hrr/verify_unrestricted.py}{\nolinkurl{https://zhangchi-chen.github.io/downloads/hrr/verify_unrestricted.py}}
checks the exterior identities and determinant calculation for the seed case $(k,n)=(2,4)$ in Section~\ref{sec:unrestricted}.
It also checks the determinant and Nakano coefficient matrices for $k=3,4,5$ and the scalar identity defining $t_k$ for arbitrary rank.
\item \href{https://zhangchi-chen.github.io/downloads/hrr/verify_sd.py}{\nolinkurl{https://zhangchi-chen.github.io/downloads/hrr/verify_sd.py}}
checks the SD seed case $(k,n)=(2,6)$ in Section~\ref{sec:sdnegative}: the determinant formula and factorization and the explicit kernel at $t_* = \sqrt6-2$.
For the factorization it checks sixteen distinct integer values; both sides have degree at most fifteen.
It also checks the rank-extension block identities at three successive sizes and the scalar identities used in the general argument.
\end{enumerate}

To run the downloaded files locally, use the following commands in their directory:
\begin{verbatim}
python3 -m pip install sympy
python3 verify_unrestricted.py
python3 verify_sd.py
\end{verbatim}
Alternatively, one can also verify the computation online for free at \href{https://sagecell.sagemath.org/}{\nolinkurl{https://sagecell.sagemath.org/}} by selecting \texttt{Python} in the language menu, pasting the complete contents of one downloaded file into the input cell, and clicking \texttt{Evaluate}.
Each file should finish with \texttt{ALL CHECKS PASSED.}

\bigskip
\noindent\textbf{Acknowledgements.}
The author thanks Siarhei Finski for discussing problems in linear algebra during the conference ``Recent Developments in Complex Geometry'' in B\k{e}dlewo, Poland, in 2026, and for later discussions of the cases $n=k$ and $n=k+1$.
These discussions renewed the author's interest in Dinh--Nguy\^en's question, and lead to the formalism of the open questions listed in this article.
The author thanks Jian Xiao for suggesting the connection between the construction in Theorem~\ref{thm:sdnegative} and Ross--Toma's work \cite[Example~9.2, p.~37 of arXiv:1905.13636v3]{RT}.
The author also thanks Weizhe Zheng and Enhan Li for inspiring discussions and remarks.

{\raggedright
Zhangchi Chen was supported by the National Key R\&D Program of China (No.~2025YFA1018300), the National Natural Science Foundation of China (grant No.~12501104), the Science and Technology Commission of Shanghai Municipality (grant No.~22DZ2229014), the Shanghai Sailing Program (grant No.~24YF2709900), and the Shanghai Pujiang Program (grant No.~24PJA023).
He also acknowledges support from the CDP C2EMPI (R-CDP-24-004-$C^2$EMPI).
\par
}

\bigskip
\noindent\textbf{Statement on the use of artificial intelligence.}
The counterexamples in Theorem~\ref{thm:main} were constructed by AI.
Theorem~\ref{thm:sdnegative} was also obtained using AI; its underlying idea comes from Ross--Toma's construction \cite[Example~9.2, p.~37 of arXiv:1905.13636v3]{RT}, where the seed degeneration first appeared.
The examples presented here give a rank-two SD determinant realization and extensions in rank and dimension.
The author checked the seed calculations with SageMath.
OpenAI's Codex and GLM-5.3 served as interactive research, computation, and writing assistants.
The author has fully read, checked, and understood the manuscript and takes sole responsibility for it.

\end{document}